\documentclass{compositio}

\title{Extensions of differential representations of $\SL_2$ and  tori}
\author{Andrey Minchenko}
\email{aminchen@uwo.ca}
\address{University of Western Ontario\\ Department of Mathematics\\
 London, ON N6A 5B7, Canada}

\author{Alexey Ovchinnikov}
\email{aovchinnikov@qc.cuny.edu}
\address{City University of New York,
Queens College, Department of Mathematics,
65-30 Kissena Blvd, Flushing, NY 11367, USA
}

\thanks{A. Ovchinnikov was supported by the grants: NSF  CCF-0952591 and  PSC-CUNY  No.~60001-40~41.}

\date\today

\usepackage[pdfstartview=FitH,
            colorlinks,
           linkcolor=reference,
            citecolor=citation,
            urlcolor=e-mail,
           backref]{hyperref}

\usepackage{rnote,amsmath,theorems,leftidx}
\usepackage{tikz}

\def\SL{{\bf SL}}
\DeclareMathOperator{\Z}{\mathbb Z}
\DeclareMathOperator{\Ker}{\mathrm Ker}
\DeclareMathOperator{\wt}{\mathrm wt}

\DeclareMathOperator{\U}{\mathcal U}

\DeclareMathOperator{\Id}{\mathrm Id}

\DeclareMathOperator{\I}{\mathbb I}
\DeclareMathOperator{\Q}{\mathbb Q}

\DeclareMathOperator{\Const}{\mathcal C}

\DeclareMathOperator{\Ob}{Ob}
\DeclareMathOperator{\Char}{char}

\newcommand{\Le}{\leqslant}
\newcommand{\Ge}{\geqslant}

 \usepackage{mathptmx}

\keywords{differential algebraic groups, differential representations}

\classification{12H05 (primary), 13N10, 20G05(secondary)}

\begin{abstract} 
Linear differential algebraic groups (LDAGs) measure differential algebraic dependencies among solutions of linear differential and difference equations with parameters, for which LDAGs are Galois groups.
The differential representation theory is a key to developing algorithms computing these groups. 
 In the rational representation theory of algebraic groups, one starts with $\SL_2$ and tori to develop the rest of the theory. 
 In this paper,
we give an explicit description of differential representations of tori and differential extensions of irreducible representation of $\SL_2$. 
In these extensions, the two irreducible representations can be non-isomorphic. This is in contrast to differential representations of tori, which turn out to be direct sums of isotypic representations.
\end{abstract}

\begin{document}
\maketitle

\section{Introduction}
Linear differential algebraic groups (LDAGs) were introduced in \cite{Cassidy,CassidyRep,KolDAG,CassidyClassification} and
are now extensively used to study ordinary and partial differential and difference equations~\cite{CassidySinger,CharlotteLucia,CharlotteComp,hardouin_differential_2008,MichaelPhyllisFactoring,OvchRecoverGroup,OvchTannakian}, where these groups play the role of Galois groups and measure differential algebraic dependencies among the solutions.
Due to~\cite{SitSL2}, one has a complete description of differential algebraic subgroups of the LDAG $\SL_2$. However, in order to develop algorithms for the differential and difference equations mentioned above, knowledge of the differential representation theory is essential. But, even the differential representation theory of $\SL_2$ is largely unknown, with the initial observations made in~\cite{diffreductive}. In the present paper, we make a first step in resolving this problem. 

Our main result, Theorem~\ref{thm:main}, is an explicit description of differential extensions of irreducible representations of $\SL_2$ over an ordinary differential field $\K$ of characteristic zero\footnote{Although we consider the case of one derivation on $\K$, it is possible to carry out our constructions in the case of several commuting derivations. However, this would significantly increase the complexity of notation without introducing new ideas. Hence, we have decided not to include this case into the present paper.}, not necessarily differentially closed. 
However, we require that $\K$ has an element whose derivative is not zero. The main idea is to construct an embedding of such a representation or its dual into the ring $\K\{x,y\}$ of differential polynomials in two differential indeterminates. However, if a differential representation of $\SL_2$ is an extension of more than two irreducible representations, it might not be embeddable into $\K\{x,y\}$ as Example~\ref{ex:counterexample} shows. This demonstrates  one of the numerous subtleties that differential representations have.

In the classical rational representation theory of the algebraic group $\SL_2$ in characteristic zero, every finite-dimensional $\SL_2$-module is a direct sum of irreducible ones, and each of those is isomorphic to
$$
\Span_\K\left\{x^d,x^{d-1}y,\ldots, xy^{d-1},y^d\right\} \subset \K[x,y],
$$
for some $d \Ge 0$, where the action of $\SL_2$ is:
$$
\SL_2 \ni \begin{pmatrix}
a& b\\
c&d
\end{pmatrix} \mapsto
\begin{cases}
x\mapsto ax+cy,\\
y \mapsto bx + dy.
\end{cases}
$$
However, this approach does not directly generalize to differential representations of $\SL_2$ for various reasons. On the one hand, the irreducible ones are all algebraic (given by polynomials without derivatives) \cite[Theorem~3.3]{diffreductive} and, therefore, are fully described as above. On the other hand, not every differential representation of $\SL_2$ is a direct sum of irreducible ones \cite[Theorem~3.13, Example~3.16 and Remark~4.9]{diffreductive}.  Hence, to describe them, we will need to characterize all indecomposable differential representations, that is, the ones that are not direct sums of any proper subrepresentations. All other differential representations will, therefore, be direct sums of those.

In order to follow this different approach, we first obtain all indecomposable representations from the ones that have only one minimal and one maximal subrepresentation using standard pull-backs and push-outs (Sections~\ref{sec:Rep0} and~\ref{sec:ss}). Now, it only remains to characterize that special subclass, denoted by $\Rep_0$, of indecomposable representations. The goal is to produce a description that is easy to use. For this, we first embed every representation of $\Rep_0$ into the ring of differential polynomial functions on $\SL_2$, which is the quotient of the ring of differential polynomials in four differential indeterminates by the differential ideal generated by  $\det-1$ (see~\eqref{eq:ABP}, Proposition~\ref{prop:reg}, and Example~\ref{ex:reg}). However, the presence of  differential relations in the quotient makes it difficult to use. 

Certainly, to embed representations  from $\Rep_0$ (or their duals, at least) into $\K\{x,y\}$ would be desirable but is impossible as we have already pointed out (Example~\ref{ex:counterexample}). However, we discover an important subset of $\Rep_0$ for which it is true that each representation or its dual embed into $\K\{x,y\}$. These representations are extensions of two irreducible $\SL_2$-modules, and are the main ingredients of our paper. 
Moreover, after embedding the representation into $\K\{x,y\}$, we show how to characterize these extensions inside $\K\{x,y\}$. This is the only place where we use the requirement for $\K$ to contain a non-constant element (see Lemma~\ref{lem:firstorder} as well as the preparatory results from Section~\ref{sec:Gmdiffpoly}).

The situation is very much different for differential representations of tori (whose differential algebraic subgroups were characterized in \cite[Chapter~IV]{Cassidy}). In particular, as we show for comparison in Theorem~\ref{thm:tori}, the only indecomposable differential representaions of a torus are extensions of isomorphic irreducible representations. This is certainly much simpler to handle than even our case of differential extensions of two irreducible representations of $\SL_2$, showing another subtlety that we have to face and deal with here.

One can apply the differential representation theory of $\SL_2$ to developing an algorithm that computes the differential Galois group of a system of linear differential equations with parameters. Such an algorithm for the non-parameterized Galois theory  usually operates with a list of groups that can possibly occur and step-by-step eliminates the choices~\cite{Kovacic1,FelixMichael1,FelixMichael2,FelixMichael3,HRUW,FelixJAW,van_der_put_galois_2003}. In the parameterized case, one can determine  the possible  block structures (by factoring the original differential equation and its prolongations with respect to the parameter if needed): the sizes of the irreducible diagonal blocks and whether the extensions they form are trivial. It turns out that this and the classification results from our paper combined with the reductivity test that is being developed in~\cite{CharlotteAlexey} are definitive enough for the ``elimination process'' mentioned above to become a part of an algorithm for parameterized systems of order up to $4$, see also~\cite{Dreyfus}.

The paper is organized as follows. We recall the basic definitions of differential algebra and differential algebraic groups in Section~\ref{sec:basicdef}. In Section~\ref{sec:preparation}, we also recall how to construct all representation
from our building blocks, representations with one minimal and maximal subrepresentations. Section~\ref{sec:main}, the main part of the paper, starts with a description of all indecomposable differential representations of tori in Section~\ref{sec:Gm}, which we then compare with differential representations of $\SL_2$ in Section~\ref{sec:SL2} and show our main result, Theorem~\ref{thm:main}, there. We finish the paper by an example demonstrating that the hypothesis of our main result cannot be relaxed. 

\section{Basic definitions}\label{sec:basicdef}
 A $\partial$-ring $R$ is a commutative associative ring with unit $1$ and a  derivation $\partial: R\to R$ such that
$$
\partial(a+b) = \partial(a)+\partial(b),\quad \partial(ab) =
\partial(a)b + a\partial(b)
$$
for all $a, b \in R$. 
For example, $\Q$ is a $\partial$-field (a field and a $\partial$-ring at the same time) with the unique
possible derivation (which is the zero one). The field
$\mathbb{C}(t)$ is also a $\partial$-field with $\partial(t) = f,$
and this $f$ can be any element of $\mathbb{C}(t).$
Let
$$
\Theta = \left\{\partial^i\:|\: i \Ge 0\right\}.$$
Since $\partial$ acts on $R$,
there is a natural action of $\Theta$ on $R$. For $r \in R$, we also denote $\partial r$ by $r'$  and $\partial^i r$ by $r^{(i)}$, $i \Ge 2$, whenever it is convenient.

Let $R$ be a $\partial$-ring. If $B$ is an $R$-algebra, then $B$ is a $\partial$-$R$-algebra
if the action of $\partial$ on $B$ extends the
action of $\partial$ on $R$.
Let $Y = \{y_1,\ldots,y_n\}$ be a set of variables. We differentiate them:
$$
\Theta Y := \left\{\partial^iy_j
\:\big|\: i \Ge 0,\ 1\Le j\Le n\right\}.
$$
The ring of differential polynomials $R\{Y\}$ in
differential indeterminates $Y$
over $R$ is
the ring of commutative polynomials $R[\Theta Y]$
in infinitely many algebraically independent variables $\Theta Y$ with
the derivation $\partial$ that 
extends the $\partial$-action on $R$ as follows:
$$
\partial\left(\partial^i y_j\right) := \partial^{i+1}y_j,\quad i \Ge 0,\ 1 \Le j \Le n.$$
 An ideal $I$ in a $\partial$-ring $R$ is called differential if it is stable under the action of
$\partial$, that is,
$
\partial(a) \in I$ for all  $a \in I$. 
 If $F \subset R$, then $[F]$ denotes the differential ideal generated by $F$.

We shall recall some definitions and results from differential
algebra (see  \cite{Cassidy,Kol} for more detailed information)
leading up to the ``classical definition'' of a linear differential
algebraic group.
Let $\K$ be a $\partial$-field. In what follows, we will assume that $\Char\K = 0$.
Let $\U$ be a  differentially closed field containing $\K$ (see \cite[Definition 3.2]{CassidySinger}, \cite[Definition~4]{TrushinSplitting}, and the references given there).
Let also $\Const\subset\U$ be its subfield of constants\footnote{One can show that the field $\Const$ is algebraically closed.}, that is, $\Const = \ker\partial$.

\begin{definition} For a differential field extension $K\supset \K$, a {\it Kolchin closed} subset $W(K)$ of $K^n$ over $\K$ is the set of common zeroes
of a system of differential algebraic equations with coefficients in $\K,$ that is, for $f_1,\ldots,f_k \in \K\{Y\}$ we define
$$
W(K) = \left\{ a \in K^n\:|\: f_1(a)=\ldots=f_k(a) = 0\right\}.$$
\end{definition}

There is a bijective correspondence between Kolchin closed subsets
$W$ of $\U^n$ defined over $\K$ and radical differential ideals
$\I(W) \subset \K\{y_1,\ldots,y_n\}$
generated by the differential polynomials $f_1,\ldots,f_k$ that define $W$.
In fact, the $\partial$-ring $\K\{Y\}$ is
Ritt-Noetherian, meaning that every radical
differential ideal is the radical of a finitely
generated differential ideal by the Ritt-Raudenbush basis theorem.
Given a Kolchin closed subset $W$ of
$\U^n$ defined over $\K$, we let the {\it coordinate ring} $\K\{W\}$
be
$$
\K\{W\} = \K\{y_1,\ldots,y_n\}\big/\I(W).
$$
A differential polynomial map $\varphi : W_1\to W_2$ between Kolchin closed subsets of $\U^{n_1}$ and $\U^{n_2}$, respectively, defined over $\K$, is given in coordinates by differential polynomials in
$\K\{W_1\}$. Moreover, to give $\varphi : W_1 \to W_2$
is equivalent to defining $\varphi^* : \K\{W_2\} \to \K\{W_1\}$.

\begin{definition}\cite[Chapter II, Section 1, page 905]{Cassidy}\label{def:LDAG} A {\it linear differential
algebraic group} 
is a Kolchin closed subgroup $G$ of $\GL_n(\U),$
that is, an intersection
of a Kolchin closed subset of $\U^{n^2}$ with $\GL_n(\U)$ that is closed under
the group operations.
\end{definition}

Again, in what follows, LDAG stands for linear differential algebraic group. Note that we identify $\GL_n(\U)$ with the Zariski closed
subset of $\U^{n^2+1}$ given by
$$\left\{(M,a)\:\big|\: (\det(M))\cdot a-1=0\right\}.$$
If $X$ is an invertible $n\times n$ matrix, we
can identify it with the pair $(X,1/\det(X))$. Hence, we may represent the coordinate ring of $\GL_n(\U)$ as
$
\K\{X,1/\det(X)\}$.
Denote $\GL_1$ simply by $\Gm$, called the multiplicative group. Its coordinate ring
is $\K\{y,1/y\}$. 
The LDAG with coordinate ring $\K\{y\}$ is denoted by $\Ga$, called the additive group. Finally, $\SL_2$ is the LDAG with the coordinate ring   $$\K\{c_{11},c_{12},c_{21},c_{22}\}/[c_{11}c_{22}-c_{12}c_{21} -1],$$
where the differential ideal of the quotient is radical because of \cite[Lemma~3.4]{Buium1993}.

\begin{definition}\cite{CassidyRep},\cite[Definition~6]{OvchRecoverGroup} Let $G$ be a LDAG. A differential polynomial
group homomorphism  $$r : G \to \GL(V)$$ is called a
{\it differential representation} of $G$, where $V$ is a
finite-dimensional vector space over $\K$. Such space is
simply called a {\it $G$-module}. This is equivalent to giving a comodule structure $$\rho : V \to V\otimes_\K \K\{G\}, $$ see \cite[Definition~7 and Theorem~1] {OvchRecoverGroup}. 

As usual, {\it morphisms} between $G$-modules are $\K$-linear maps that are $G$-equivariant. The category of differential representations of $G$ is denoted by $\Rep G$.
\end{definition}

\begin{remark} We will be going back and forth between the module and comodule terminology depending on the situation. The comodule language is needed primarily to avoid unnecessary extensions of scalars from $\K$ to $\U$ as our main classification result is over $\K$. 
\end{remark}

By \cite[Proposition~7]{Cassidy}, $r(G)\subset\GL(V)$ is a differential algebraic subgroup. 
Given a representation $r$ of a LDAG $G$, one can define its prolongation 
$F(r) : G \to \GL(FV)
$ with respect to $\partial$ as follows~\cite[Definition 4 and Theorem 1]{OvchRecoverGroup}: let 
\begin{equation}\label{eq:prolongation}
F(V) =\leftidx{_{\K}}{\left((\K\oplus \K\partial)_{\K}\otimes_{\K} V\right)}
\end{equation}
as vector spaces (see \cite[Section~4.3]{difftanncat} for a coordinate-free definition). Here, $\K\oplus \K\partial$ is considered as the right $\K$-module:
$
\partial\cdot a = \partial(a) + a\partial
$
for all $a \in \K$.
Then the action of $G$ is given by $F(r)$ as follows:
$$
F(r)(g) (1\otimes v) := 1\otimes r(g)(v),\quad F(r)(g)(\partial\otimes v) := \partial\otimes r (g)(v)
$$
for all $g \in G$ and $v \in V$. In the language of matrices, if $A_g \in \GL_n$ corresponds to the action of $g \in G$  on $V$, then the matrix
$$
\begin{pmatrix}
A_g&\partial A_g\\
0&A_g
\end{pmatrix}
$$
corresponds to the action of $g$ on $F(V)$. 

\section{Preparation}\label{sec:preparation}
Let $G$ be a group. In this section, we will recall some general terminology and basic facts that are useful to study non-semisimple categories of representations, that is, when not every $G$-module decomposes into a direct sum of irreducibles. This is precisely what we need to be able to handle to study differential representations of  LDAGs to obtain the main result of the paper in Section~\ref{sec:main}. 

\subsection{The set $\Rep_0G$ and its use}\label{sec:Rep0}
We start by introducing a special subset of representations $\Rep_0G$ and show how the rest of the representations can be reconstructed from it.
Since every $G$-module is a sum of indecomposable ones, it suffices to describe indecomposable modules.
As we will see below, it is possible to restrict ourselves to even a smaller subset of representations so that: 
\begin{itemize}
\item we are still able to recover all representations from it using only a few operations of linear algebra, namely pull-backs and push-outs, but not using $\otimes$, for instance, which is important for computation;
\item this set itself is much easier to describe.
\end{itemize}

\begin{definition} For an abstract group $G$, let $\Rep_0G$ be the set of all finite-dimensional $G$-modules $V$ having a unique minimal and a unique maximal submodules. The set $\Irr G$ of all simple $G$-modules is a subset of $\Rep_0G$ and every $V\in\Rep_0G$ is indecomposable (since otherwise $V$ has at least two minimal submodules).
\end{definition}

\begin{definition}
A $G$-module $V$ is said to be a {\it pull-back} of $V_1,V_2\in\Ob(\Rep G)$ if there is a $G$-module $W$ with surjections $\pi_k: V_k\to W$, $k=1,\:2$, such that $V$ is isomorphic to the pull-back of the maps $\pi_1$ and $\pi_2$. 

We say that $V$ is a {\it push-out} of $G$-modules $V_1$ and $V_2$ if there is a $G$-module $W$ with embeddings $\iota_k: W\to V_k$, $k=1,2$, such that $V$ is isomorphic to the push-out of the maps $\iota_1$ and $\iota_2$.
\end{definition}

\begin{proposition}\label{pull-push}
Every finite-dimensional $G$-module $V$ can be obtained from $\Rep_0G$ by iterating pull-backs and push-outs.
\end{proposition}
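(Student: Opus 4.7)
The plan is to proceed by induction on $\dim V$. The base case is when $V$ is simple: then $V\in\Irr G\subset\Rep_0G$ and there is nothing to prove. For the inductive step, if $V\in\Rep_0G$ we are done; otherwise, by definition of $\Rep_0G$, $V$ has either two distinct minimal submodules or two distinct maximal submodules, and I will exhibit $V$ as a pull-back or push-out of two $G$-modules of strictly smaller dimension, to which the inductive hypothesis applies.

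Suppose first that $V$ contains two distinct minimal submodules $S_1,S_2$. Since both are simple and distinct, $S_1\cap S_2=0$, so the diagonal map $v\mapsto(v+S_1,\,v+S_2)$ embeds $V$ into $V/S_1\oplus V/S_2$; an elementary verification shows that its image is precisely the pull-back
$$
V/S_1\times_{V/(S_1+S_2)}V/S_2
$$
of the canonical surjections $V/S_k\to V/(S_1+S_2)$, $k=1,2$. Dually, if $V$ has two distinct maximal submodules $M_1,M_2$, then by maximality $M_1+M_2=V$, and the addition map $(m_1,m_2)\mapsto m_1+m_2$ identifies $V$ with the push-out of the inclusions $M_1\cap M_2\hookrightarrow M_k$, $k=1,2$. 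In both situations, the two outer modules ($V/S_k$ or $M_k$) have strictly smaller dimension than $V$, so the inductive hypothesis expresses each of them in terms of $\Rep_0G$, and iterating the pull-back/push-out construction yields the desired expression for $V$.

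No substantial obstacle arises; the argument is essentially bookkeeping once the two pull-back/push-out structures on $V$ are spotted. The only subtle point worth keeping in mind is that the intermediate $G$-module $W$ appearing in the construction (here $V/(S_1+S_2)$ or $M_1\cap M_2$) need not itself be built from $\Rep_0G$: the definition of pull-back and push-out merely requires such a $W$ to exist as auxiliary data, so the recursion needs to be applied only to the two outer constituents $V_1$ and $V_2$.
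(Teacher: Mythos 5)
Your proof is correct and follows essentially the same route as the paper: induct on $\dim V$, and when $V\notin\Rep_0G$ realize it either as the pull-back of $V/S_1\to V/(S_1+S_2)\leftarrow V/S_2$ for two distinct minimal submodules, or as the push-out of $M_1\hookleftarrow M_1\cap M_2\hookrightarrow M_2$ for two distinct maximal submodules. The only (cosmetic) difference is that you verify the push-out via the explicit quotient description of $M_1\oplus M_2$ rather than by checking the universal property as the paper does.
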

\begin{proof}
Suppose that $V\not\in\Rep_0G$ has two distinct minimal submodules $U_1$ and $U_2$. Set $$V_k:=V/U_k,\ \ k=1,\:2,\quad \text{and}\quad W:=V/(U_1+U_2).$$ Then $V$ is the pull-back of the corresponding (surjective) maps $\pi_k: V_k\to W$, $k=1,2$. Indeed, since $U_1\cap U_2=0$, $V$ embeds into the pull-back $$V_{12}:=\{(v_1,v_2)\in V_1\times V_2\colon \pi_1(v_1)=\pi_2(v_2)\}.$$  On the other hand, if $\overline{v_k}\in V_k$, $k=1,\:2$, and $\pi_1(\overline{v_1})=\pi_2(\overline{v_2})$, then there are $v_1,\:v_2\in V$ such that $$v_1+U_1+U_2=v_2+U_1+U_2.$$ Hence, $$v_1+u_1=v_2+u_2=:v\in V\quad \text{for some}\ \ u_k\in U_k,\ \ k=1,\:2.$$ This shows that $\overline{v_k}$ is the image of $v$ under the quotient map $V\to V_k$, $k=1,\:2$. Hence, $V\simeq V_{12}$. 

Now suppose that $V$ has two distinct maximal submodules $V_1$ and $V_2$. Let $$U:=V_1\cap V_2\quad \text{and}\quad \iota_k: U\to V_k,\ \ k=1,\:2,$$ be the corresponding embeddings. Then $V$ is isomorphic to the push-out of the maps $\iota_1$ and $\iota_2$. Indeed, let $W$ be a $G$-module with morphisms $\alpha_k: V_k\to W$, $k=1,2$, such that $\alpha_1\iota_1=\alpha_2\iota_2$. Since $V=V_1+V_2$, this implies that the morphism $\alpha: V\to W$ given by  $$\alpha(v_1+v_2)=\alpha_1(v_1)+\alpha_2(v_2)\quad\text{for all $v_k\in V_k,\ \ k=1,\:2$}$$ 
is well-defined. Hence, $V$ is the push-out.
Finally, the statement of the proposition follows by induction on $\dim V$.
\end{proof}

Pull-backs and push-outs have a simple description in terms of matrices. This is why Proposition \ref{pull-push} is particularly useful in computation. Namely, if $\pi_k: V_k\to W$, $k=1,2$, are the surjections, then we can choose bases of $V_1$ and $V_2$ such that every $g\in G$ is represented in $\GL(V_1)$ and $\GL(V_2)$ by matrices of the form
$$
\begin{pmatrix}
A(g) & B(g)\\
0 & C(g)
\end{pmatrix}
\quad\text{and}\quad 
\begin{pmatrix}
A_1(g) & B_1(g)\\
0 & C(g)
\end{pmatrix},
$$
where $C(g)$ corresponds to the representation $G\to\GL(W)$.
Then the pull-back $V$ of $\pi_1$ and $\pi_2$ has the following matrix structure:
$$
G \ni g \mapsto
\begin{pmatrix}
A(g) & 0 & B(g)\\
0 & A_1(g) & B_1(g)\\
0 & 0 & C(g)
\end{pmatrix}.
$$
In terms of bases, if $V_1= \Span\{E_1,E_2\}$ and $V_2 = \Span\{F_1,F_2\}$, where $E_i$'s and $F_i$'s are the sequences of basis elements corresponding to the block structure, then $V$ can be viewed as $$\Span\{E_1,F_1,E_2+F_2\} \subset V_1\oplus V_2,$$ where $E_2+F_2$ means the sum of the corresponding basis elements.

If $\iota_k: U\subset V_k$, $k=1,2$, are embeddings, we can choose bases of $V_1$ and $V_2$ such that every $g\in G$ is represented in $\GL(V_1)$ and $\GL(V_2)$ by matrices of the form
$$
\begin{pmatrix}
A(g) & B(g)\\
0 & C(g)
\end{pmatrix}
\quad\text{and} 
\begin{pmatrix}
A(g) & B_1(g)\\
0 & C_1(g)
\end{pmatrix}.
$$
where $A(g)$ corresponds to the representation $G\to\GL(U)$. Then the push-out $V$ of $\iota_1$ and $\iota_2$ has the following matrix structure:
$$
G\ni g \mapsto
\begin{pmatrix}
A(g) & B(g) & B_1(g)\\
0 & C(g) & 0\\
0 & 0 & C_1(g)
\end{pmatrix}.
$$

\subsection{Simple socle}\label{sec:ss}
The observations from this section will be further used in Section~\ref{sec:main} to prove our main result.
Recall that the \emph{socle} $V_{soc}$ of a $G$-module $V$ is the smallest submodule of $V$ containing all simple submodules of $V$. In particular, if $V$ is finite-dimensional, $V_{soc}$ is the sum of all simple submodules of $V$. If $V_{soc}$ is simple, it is a unique minimal submodule of $V$. Conversely, if $V$ contains a unique minimal submodule, $V_{soc}$ is simple (and coincides with the submodule). Any $V\in\Rep_0G$ has a simple socle.

\begin{remark}
There are two alternative definitions of the set $\Rep_0 G$:
\begin{enumerate}
\item $\Rep_0G$ is the smallest set $S$ of $G$-modules with the property that every finite-dimensional $G$-module is obtained from $S$ by a sequence of pull-backs and push-outs.
\item $\Rep_0G$ is the set of $G$-modules $V$ such that $V$ and $V^\vee$ have simple socles.
\end{enumerate}
\end{remark}

\begin{proposition}\label{inj}
Let $V$ be a $G$-module with simple socle and $\alpha\colon V\to W$ a morphism of $G$-modules such that $\alpha(V_{soc})\neq 0$. Then $\alpha$ is injective. Moreover, if $W=\prod_{i\in I}W_i$, then there exists $i\in I$ such that $\pi_i\alpha$ is an isomorphism of $V$ and a submodule of $W_i$, where $\pi_i: W\to W_i$ is the projection. 
\end{proposition}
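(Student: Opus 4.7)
The plan is to prove the two assertions in order, using only the definition of simple socle and the fact that in a finite-dimensional module every nonzero submodule contains a simple one.

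For injectivity of $\alpha$, I would consider the submodule $\ker\alpha \cap V_{soc}$ of the simple module $V_{soc}$. Since $V_{soc}$ is simple, this intersection is either $0$ or all of $V_{soc}$; the latter is ruled out by the hypothesis $\alpha(V_{soc})\neq 0$, so $\ker\alpha \cap V_{soc} = 0$. Now suppose $\ker\alpha \neq 0$. Because $V$ is finite-dimensional, $\ker\alpha$ contains some simple submodule $U$, and every simple submodule of $V$ lies in the socle $V_{soc}$. Hence $U \subset V_{soc}\cap\ker\alpha = 0$, a contradiction. Therefore $\ker\alpha = 0$.

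For the ``moreover'' statement, the idea is to reduce to the previous part by choosing a projection that does not kill the socle. Since $\alpha(V_{soc})$ is a nonzero submodule of $W = \prod_{i\in I} W_i$, there exists $i \in I$ such that $\pi_i(\alpha(V_{soc})) \neq 0$, because a tuple is zero precisely when all its components are. Applying the injectivity statement already proved to the morphism $\pi_i\alpha \colon V \to W_i$ (whose socle image is nonzero by construction) yields that $\pi_i\alpha$ is injective. Its image $\pi_i\alpha(V)$ is then a submodule of $W_i$ isomorphic to $V$, as required.

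No step is really an obstacle here; the only subtlety is making sure we invoke the first part on $\pi_i\alpha$ only after verifying the hypothesis $\pi_i\alpha(V_{soc})\neq 0$, and implicitly using that $V$ is finite-dimensional in order to guarantee existence of a simple submodule of $\ker\alpha$. Both points are immediate from the setup in the paper.
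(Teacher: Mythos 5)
Your proof is correct and follows essentially the same route as the paper's: the paper simply notes that a nonzero kernel would have to contain $V_{soc}$ (the unique minimal submodule), then reduces the product case to the first part by picking an index $i$ with $\pi_i(\alpha(V_{soc}))\neq 0$, exactly as you do. Your extra step of extracting a simple submodule of $\ker\alpha$ via finite-dimensionality is just an expanded justification of the same point.
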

\begin{proof}
If the submodule $\Ker\alpha\subset V$ is non-zero, it must contain $V_{soc}$, the smallest submodule of $V$. Since $V_{soc}\not\subset\Ker\alpha$, we have $\Ker\alpha=0$. To prove the second part of the statement, note that there is an index $i\in I$ such that $\pi_i(\alpha(V_{soc}))\neq 0$. Then we apply the first part of the statement to the map $\pi_i\alpha:V\to W_i$.   
\end{proof}

Let $G$ be a LDAG.
Its coordinate ring $A := \K\{G\}$ has a structure of a differential Hopf algebra, that is, a Hopf algebra in which the comultiplication, antipode, and counit are homomorphisms of differential algebras \cite[Section~3.2]{OvchRecoverGroup} and \cite[Section~2]{CassidyRep}.  Let $$\Delta : A \to A\otimes_\K A$$ be the comultiplication inducing the 
right-regular $G$-module structure on $A$ as follows (see also \cite[Section~4.1]{OvchRecoverGroup}). For $g, x \in G(\U)$ and $f \in A$,
$$
\left(r_g(f)\right)(x)=f(x\cdot g) = \Delta(f)(x,g)= \sum_{i=1}^nf_i(x) g_i(g),
$$
where $\Delta(f) = \sum_{i= 1}^n f_i\otimes g_i$.

\begin{proposition}\label{reg}\label{prop:reg}
Every finite-dimensional $G$-module $V$ with simple socle embeds into the regular functions $A$. 
\end{proposition}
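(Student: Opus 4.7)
The plan is to embed $V$ into a free right-regular comodule and then use the simple socle hypothesis together with Proposition~\ref{inj} to project onto a single copy of $A$. Let $\rho\colon V\to V\otimes_\K A$ be the comodule structure map of $V$. Equip $V\otimes_\K A$ with the right $A$-comodule structure $\Id_V\otimes \Delta$; a choice of $\K$-basis $v_1,\ldots,v_n$ of $V$ then identifies $V\otimes_\K A$ with $A^{\oplus n}$ as right $A$-comodules, i.e.\ as $G$-modules on which $G$ acts via the right-regular representation on each of the $n$ copies of $A$.

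The first step is to verify that $\rho$, regarded as a map into $V\otimes_\K A$ with this structure, is a morphism of $G$-modules. This is precisely the coassociativity identity for the comodule $V$:
$$
(\rho\otimes\Id_A)\circ\rho = (\Id_V\otimes\Delta)\circ\rho,
$$
which, by the definition of a comodule morphism, says that $\rho$ intertwines the comodule structure on $V$ with the structure $\Id_V\otimes\Delta$ on the target. The second step is to observe that $\rho$ is injective; this follows from the counit axiom $(\Id_V\otimes\varepsilon)\circ\rho = \Id_V$.

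Finally, assuming $V\neq 0$, the simple socle $V_{soc}$ is nonzero, and injectivity of $\rho$ gives $\rho(V_{soc})\neq 0$. Proposition~\ref{inj}, applied to the morphism $\rho\colon V\to V\otimes_\K A\cong \bigoplus_{i=1}^n A$, then yields an index $i$ such that the composition $\pi_i\circ\rho\colon V\to A$ is an isomorphism of $V$ onto a submodule of $A$, producing the desired embedding.

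The only subtle point is the choice of comodule structure on $V\otimes_\K A$: one must use $\Id_V\otimes\Delta$ rather than the structure inherited from $V$, and it is exactly the coassociativity of $\rho$ that makes the map $V\to V\otimes_\K A$ equivariant for this ``regular on the right factor, trivial on the left'' structure. It is this choice that realizes $V\otimes_\K A$ as a direct sum of copies of the regular comodule $A$, which is what allows Proposition~\ref{inj} to be applied coordinate by coordinate.
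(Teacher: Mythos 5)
Your proof is correct and follows essentially the same route as the paper: the paper invokes \cite[Lemma~3]{OvchRecoverGroup} for the embedding $V\hookrightarrow A^{\dim V}$ and then applies Proposition~\ref{inj}, whereas you simply prove that cited embedding by hand via coassociativity and the counit axiom. The final step, using the simple socle together with Proposition~\ref{inj} to project onto a single copy of $A$, is identical.
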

\begin{proof}
By~\cite[Lemma~3]{OvchRecoverGroup}, $V$ embeds into $A^{\dim V}$. Now the statement follows from Proposition~\ref{inj}.
\end{proof}

\begin{example}\label{ex:reg} Let $V = \Span_\K\left\{x^2,xy,y^2,x'y-xy'\right\} \subset \K\{x,y\}$ and the action of $\SL_2$ is given by 
$$\SL_2(\U) \ni \begin{pmatrix}
a& b\\
c&d
\end{pmatrix} \mapsto
\begin{cases}
x^{(p)}\mapsto (ax+cy)^{(p)},\ p\Ge 0;\\
y^{(q)} \mapsto (bx + dy)^{(q)},\ q\Ge 0.
\end{cases}
$$
with the coordinate ring $$A:=\K\{x_{ij}\}/[x_{11}x_{22}-x_{12}x_{21}-1].$$ 
Hence, for the induced $A$-comodule structure $\rho_V : V\to V\otimes_\K A \cong A^4$,
\begin{align*}
x^2 &\mapsto x^2\otimes x_{11}^2 + xy\otimes 2x_{11}x_{21} + y^2\otimes x_{22}^2 ,\\
 xy&\mapsto  x^2\otimes x_{11}x_{12} + xy\otimes (x_{11}x_{22}+x_{12}x_{21}) + y^2\otimes x_{11}x_{21},\\
y^2 &\mapsto x^2\otimes x_{12}^2 + xy\otimes 2x_{12}x_{22} + y^2\otimes x_{22}^2,\\
x'y-xy'&\mapsto
x^2\otimes  \left(x_{11}'x_{12}-x_{11}x_{12}'\right) + xy\otimes 2\left(x_{11}'x_{22}-x_{12}'x_{21}\right) + y^2\otimes \left(x_{21}'x_{22}-x_{21}x_{22}'\right) + \left(x'y-xy'\right)\otimes 1.
\end{align*}
Since the projection $A^4 \to A$ onto the first coordinate (i.~e. the coefficient of $x^2$) is non-zero on $$V_{soc} = \Span_\K\left\{x^2,xy,y^2\right\},$$ this projection is injective on the whole $V$, and the image is $$\Span_\K\left\{x_{11}^2,x_{11}x_{12},x_{12}^2,x_{11}'x_{12}-x_{11}x_{12}'\right\} \subset A$$ (see also \cite[Remark~4.9]{diffreductive}).
\end{example}

By a subquotient of $V$, we mean a $G$-module $V_1/V_2$ where $V_2\subset V_1$ are submodules of $V$.
The following recalls  a way of describing categories of representations in which not every representation is a direct sum of irreducibles \cite[Section~I.4.1]{Bernstein}.

\begin{definition}\label{def:JHsplitting}
For any $V\in\Ob(\Rep G)$, denote  the set of all simple subquotients of $V$ by $\JH(V)$. For a subset $S\subset\Irr G$, we say that $V\in\Ob(\Rep G)$ is $S$-{\it isotypic}, if $\JH(V)\subset S$. 

We say that $S$ is {\it splitting} if any $V$ is a direct sum $U\oplus W$, where $\JH(U)\subset S$ and $\JH(W)\cap S=\varnothing$. 
\end{definition}
By definition, the set $\Irr G$ is splitting for $\Rep G$. For each $G$, the goal is to find as small splitting sets as possible. We will see in Proposition~\ref{splitting} that tori have splitting sets consisting just of one representation. 

The following statement will be further used in Section~\ref{sec:main}. 
\begin{proposition}\label{subquot}
Let $G\subset\GL_n$ be a LDAG defined over $\Q$ by polynomials (of order zero) and $V$ be a  $G$-module. Then every simple subquotient $U$ of $V$ is a usual (non-differential) representation of $G$ considered as a linear algebraic group. 
Moreover, simple $G$-modules are isomorphic if and only if they are isomorphic as $G(\Const)$-modules. Finally, if $G$ is reductive, then, as a $G(\Const)$-module, $V$ is a direct sum of its simple submodules.
\end{proposition}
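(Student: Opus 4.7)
For the first claim, I would apply Proposition~\ref{reg} to embed the simple subquotient $U$ into the differential coordinate ring $A := \K\{G\}$. Since $G$ is defined by polynomials of order zero over $\Q$, the ring $A$ carries an ascending $G$-stable filtration $\K[G] = A^{\le 0} \subset A^{\le 1} \subset \ldots$ by the order of derivative coordinates; each $A^{\le r}$ is a $G$-submodule under right translation because the comultiplication satisfies $\Delta(A^{\le r}) \subseteq A^{\le r} \otimes A^{\le r}$, as follows directly from $\Delta\bigl(x_{ij}^{(r)}\bigr) = \sum_{k,s} \binom{r}{s}\, x_{ik}^{(s)} \otimes x_{kj}^{(r-s)}$. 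The embedded $U$ lies in some minimal $A^{\le N}$, and simplicity forces $U \cap A^{\le N-1} = 0$, so the projection of $U$ into $A^{\le N}/A^{\le N-1}$ is injective; an analysis of this top-order piece as a $G$-module then forces $N = 0$, giving $U \subset \K[G]$. Alternatively, one may invoke the equivalence between differential representations of $G$ and algebraic representations of the proalgebraic jet group $J^\infty(G)$, whose surjection onto $G$ has prounipotent kernel: any simple algebraic representation of $J^\infty(G)$ must factor through $G$.

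For the second claim, the first part shows that simple differential $G$-modules are actually algebraic representations of $G$. Since $G$ is defined over $\Q \subset \Const$ and $\Const$ is algebraically closed, $G(\Const)$ is Zariski dense in $G(\U)$, and consequently a $\K$-linear map between two algebraic $G$-modules commutes with the full $G$-action if and only if it commutes with the $G(\Const)$-action, so the two notions of isomorphism agree.

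For the third claim, I would compose $\rho \colon V \to V \otimes \K\{G\}$ with the Hopf algebra projection $\pi \colon \K\{G\} \to \K[G]$ sending all positive-order derivative coordinates to zero, yielding an algebraic $G$-module structure $\bar\rho := (\Id \otimes \pi) \circ \rho$ on the same underlying $\K$-vector space $V$. For $g \in G(\Const)$ the evaluation $\K\{G\} \to \U$ at $g$ factors through $\pi$ (the higher-order coordinates vanish at constant points), so the $G(\Const)$-actions determined by $\rho$ and by $\bar\rho$ coincide. Hence as a $G(\Const)$-module $V$ is the same as the algebraic $G$-module $(V, \bar\rho)$, which by reductivity decomposes as a direct sum of simple algebraic $G$-submodules; Zariski density of $G(\Const)$ in $G$ then ensures these submodules are exactly the simple $G(\Const)$-submodules.

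The main obstacle is the first claim: the direct filtration argument requires nontrivial bookkeeping on the associated graded of $A$ as a $G$-module. The cleanest route seems to be through the jet-group equivalence together with the standard fact that in characteristic zero any algebraic representation of a prounipotent group has a nonzero fixed vector, forcing simple representations to be trivial on the prounipotent kernel of $J^\infty(G) \to G$.
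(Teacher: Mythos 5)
Your treatment of the second and third claims is essentially the paper's: the paper also deduces the second claim from Zariski density of $G(\Const)$ in $G$ (using that $G$ is defined over $\Q\subset\Const$ and $\Const$ is algebraically closed) and the third from complete reducibility of the reductive group $G(\Const)$. Your explicit construction of the algebraic comodule $\bar\rho=(\Id\otimes\pi)\circ\rho$, together with the observation that evaluation at a constant point factors through $\pi$, is a nice way of making precise why the $G(\Const)$-action is rational; the paper leaves this implicit. The real divergence is in the first claim, which the paper disposes of in one line by citing \cite[Theorem~3.3]{diffreductive}, whereas you attempt a proof from scratch.

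Of your two routes to the first claim, the second (jet groups with prounipotent kernel over $G$, so that simple representations factor through $G$) is sound, but it is essentially the proof of the theorem the paper cites, so you are in effect invoking the same external input. The first route has a genuine gap at its key step: after embedding $U\subset A^{\le N}$ with $N$ minimal and observing $U\cap A^{\le N-1}=0$, you assert that ``an analysis of this top-order piece\dots forces $N=0$.'' That is not justified, and as literally stated it is false for a fixed embedding: a simple \emph{algebraic} module can sit inside $A$ with image genuinely involving higher-order coordinates (e.g.\ the trivial module spanned by a differential invariant such as $x_{11}'x_{22}-x_{12}'x_{21}-x_{11}x_{22}'+x_{12}x_{21}'$ in $\K\{\SL_2\}$, which lies in $A^{\le 1}\setminus A^{\le 0}$), so minimality of $N$ for the given embedding does not give $N=0$. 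What the filtration argument can yield is that $U$ injects into the quotient $A^{\le N}/A^{\le N-1}$ and that this quotient carries an \emph{algebraic} $G$-module structure (since $\Delta\bigl(x_{ij}^{(N)}\bigr)\equiv\sum_k x_{ik}^{(N)}\otimes x_{kj}$ modulo lower-order terms in the first factor); that would suffice to conclude $U$ is algebraic, but verifying this for arbitrary elements of $A^{\le N}$, not just the generators, is exactly the ``nontrivial bookkeeping'' you acknowledge and do not carry out. So either supply that argument or simply cite \cite[Theorem~3.3]{diffreductive} as the paper does.
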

\begin{proof}
By \cite[Theorem~3.3]{diffreductive}, $U$ is algebraic. The second statement of the proposition follows from the fact that $G(\Const)$ is Zariski-dense in $G$, because it is given by polynomial equations over $\Q$, $\Q \subset \Const$, and $\Const$ is algebraically closed  \cite[Corollary~AG.13.3]{Borel}. Since the group $G(\Const)$ is reductive, $V$ is completely reducible as a $G(\Const)$-module (see, for example, \cite[Chapter 2]{SpringerInv}). 
\end{proof}

\section{Differential representations of $\Gm^n$ and $\SL_2$}\label{sec:main}
We will start by describing differential representations of the additive and multiplicative groups in Sections~\ref{sec:Ga} and~\ref{sec:Gm}, which we give here for comparison, and then show our main result on differential representations of $\SL_2$ in Section~\ref{sec:SL2}, where the situation is very different from the vector groups and tori.

\subsection{Differential representations of $\Ga^n$}\label{sec:Ga}
As usual, for a nilpotent matrix $N$ with entries in $\K$, we define $\exp(N) = \sum_{i=0}^\infty N^i/i!$. The following result not only characterizes differential representations of the additive group but is also used to describe all differential representations of tori in Theorem~\ref{thm:tori}.
\begin{proposition}\label{unip_diff}
A finite array $N=\left\{N_{i,j}\:\big|\: 1\Le i \Le n,\ j=0,1,2,\dots\right\}$ of mutually commuting nilpotent $r\times r$ matrices  with entries in $\K$ defines a LDAG homomorphism $$\alpha_N: \Ga^n\to\GL_r,\quad (x_1,\dots,x_n)\mapsto\exp\left(\sum_{j=0}^\infty\sum_{i=1}^n N_{i,j}\partial^jx_i\right).$$ Any differential representation $\alpha: \Ga^n\to\GL_r$ (over $\K$) is equivalent to $\alpha_N$ for some $N$. The representations $\alpha_N$ and $\alpha_M$ are equivalent if and only if there exists $Q\in\GL_r(\K)$ such that $M_{i,j}=QN_{i,j}Q^{-1}$ for all $i$ and $j$.
\end{proposition}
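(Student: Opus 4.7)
The plan is to split the argument into three steps: well-definedness of $\alpha_N$, existence of an array $N$ realising an arbitrary $\alpha$, and uniqueness of $N$ up to conjugation.

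The first step is essentially formal. Since the array $N$ is finite, $L_N(x):=\sum_{i,j}N_{i,j}\,\partial^j x_i$ is a finite $\K$-linear combination of commuting nilpotent matrices and is therefore itself nilpotent, so $\exp(L_N(x))$ is a matrix polynomial in the variables $\partial^j x_i$. The additivity $L_N(x+y)=L_N(x)+L_N(y)$ together with the mutual commutativity of the $N_{i,j}$ then yields the homomorphism property $\exp(L_N(x+y))=\exp(L_N(x))\exp(L_N(y))$.

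For the second step, the crucial point is that $\alpha(x)$ is unipotent. I would establish this via the following Hopf-algebraic observation: the coordinate ring $\K\{x_1,\ldots,x_n\}$, regarded as a Hopf algebra by forgetting the derivation, is the polynomial ring $\K[\{\partial^j x_i\}]$ in the primitive elements $\partial^j x_i$ (they are primitive because $\partial$ commutes with $\Delta$). A finite-dimensional comodule over this Hopf algebra involves only finitely many generators, so it is a rational representation of a finite-dimensional quotient isomorphic to $\Ga^m$, which is unipotent. Hence $\alpha(x)-I$ is nilpotent, and $L(x):=\log\alpha(x)$ is a well-defined matrix of differential polynomials. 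From commutativity of the image of $\alpha$ together with the identity $\log(AB)=\log A+\log B$ for commuting unipotent $A,B$, one obtains $L(x+y)=L(x)+L(y)$. I would then apply the standard characterization of additive differential polynomials in characteristic zero: an $f\in\K\{x_1,\ldots,x_n\}$ with $f(x+y)=f(x)+f(y)$ is a $\K$-linear combination of the $\partial^j x_i$, since any monomial of degree $\Ge 2$ in these variables produces a non-vanishing cross term under $x_i\mapsto x_i+y_i$ (binomial coefficients are non-zero). Applied entrywise, this writes $L(x)=\sum_{i,j}N_{i,j}\,\partial^j x_i$ for unique $N_{i,j}\in\K^{r\times r}$. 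Mutual commutativity $N_{i,j}N_{k,l}=N_{k,l}N_{i,j}$ then follows from $L(x)L(y)=L(y)L(x)$ (logs of commuting unipotent matrices commute) by comparing coefficients of the linearly independent monomials $\partial^j x_i\,\partial^l y_k$, and nilpotency of each $N_{i,j}$ from the fact that the coefficient of $(\partial^j x_i)^r$ in the vanishing matrix $L(x)^r$ equals $N_{i,j}^r$.

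For the third step, if $Q\alpha_N Q^{-1}=\alpha_M$, then taking logarithms gives $QL_N(x)Q^{-1}=L_M(x)$, and the $\K$-linear independence of $\{\partial^j x_i\}$ in $\K\{x_1,\ldots,x_n\}$ forces $QN_{i,j}Q^{-1}=M_{i,j}$ for all $i,j$; the converse is immediate. I expect the conceptually non-trivial point to be the unipotency of $\alpha(x)$ in Step~2, but this reduces cleanly to the Hopf-algebraic observation above; the remainder is bookkeeping with the matrix logarithm and the classical result on additive polynomials.
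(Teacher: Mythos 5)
Your proposal is correct, and its opening reduction is the same as the paper's: the comodule map of a finite-dimensional representation involves only finitely many of the $\partial^j x_i$, so $\alpha$ factors through an algebraic representation of a finite-dimensional vector group. Where you diverge is in what you do with that reduction. The paper, having produced the algebraic representation $\beta$ of $\Ga^{(k+1)n}$, simply cites the known classification of rational representations of vector groups in characteristic zero (Crumley, Theorem~12.3.6) to obtain the commuting nilpotent matrices. You instead re-derive that classification: unipotency of the image, the matrix logarithm, additivity of $L(x)=\log\alpha(x)$, the characterization of additive (differential) polynomials in characteristic zero, and then commutativity and nilpotency of the $N_{i,j}$ by comparing coefficients of linearly independent monomials. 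Your route is self-contained and makes visible exactly where commutativity and nilpotency come from; the paper's is shorter, and since your logarithm argument is essentially the standard proof of the result it cites, the two proofs agree in substance. One small point you should make explicit: to define $L(x)$ as a matrix of differential polynomials you need $(\alpha(x)-I)^r=0$ identically in $\K\left[\left\{\partial^j x_i\right\}\right]$, not merely at each point; this follows because the entries of $(\alpha(x)-I)^r$ are polynomials vanishing at every point over an infinite (algebraically closed) field. Your uniqueness step coincides with the paper's, which invokes exactly the facts you list: $\exp$ commutes with conjugation and the $\partial^j x_i$ are linearly independent.
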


\begin{proof}
It is straightforward that $\alpha_N$ is a differential representation. Now let $\alpha: \Ga^n\to\GL_r$ be a differential representation. If $k$ is the largest order of a matrix entry of $\alpha$, then there exists an algebraic representation $\beta: \Ga^{kn}\to\GL_r$ such that $$\alpha(x_1,\dots,x_n)=\beta{\left(x_1,\partial x_1,\dots, \partial^k x_1,x_2,\dots,\partial^kx_n\right)}.$$ 
Indeed, let $$\rho_\alpha : \K^r\to \K^r\otimes_\K\K\{x_1,\ldots,x_n\},\quad e_j \mapsto\sum_{i=1}^re_i\otimes a_{ij},\ 1\Le j\Le r,$$ where $\{e_1,\ldots,e_r\}$ is the standard basis of $\K^r$, be the comodule structure corresponding to $\alpha$. Then $$a_{ij} \in \K{\left[x_1,\partial x_1, \dots, \partial^k x_1,x_2,\dots,\partial^kx_n\right]},\quad 1\Le i,j\Le r.$$ Now, $\beta$ is defined to be the linear algebraic group homomorphism corresponding to the (same) comodule structure
$$
\rho : \K^r\to \K^r\otimes_\K\K{\left[x_1,\partial x_1,\dots, \partial^k x_1,x_2,\dots,\partial^kx_n\right]},\quad e_j \mapsto\sum_{i=1}^re_i\otimes a_{ij},\ 1\Le j\Le r.
$$ 

There are mutually commuting nilpotent matrices $N_i^j$, $1\le i\le n$, $0\le j\le k$, such that $$\beta{\left(\left\{\partial^jx_i\right\}\right)}=\exp{\left(\sum_{j=0}^k\sum_{i=1}^n N_{i,j}\partial^jx_i\right)}$$ (see, for instance, \cite[Theorem~12.3.6]{Crumley}). Thus, $\alpha=\alpha_N$, where $N=\{N_{i,j}\}$. The last statement follows from the definition of $\alpha_N$, that is, $\exp$ commutes with conjugation, and the linear independence of $\left\{\partial^jx_i\right\}$.
\end{proof}

\subsection{Differential representations of $\Gm^n$}\label{sec:Gm}
In Section~\ref{sec:Gmgeneral}, we will characterize all differential representations of tori. Then, Section~\ref{sec:Gmdiffpoly} contains the results on the action of $\Gm$ on differential polynomials that we further use in  Section~\ref{sec:SL2} to prove our main result.
\subsubsection{General characterization.}\label{sec:Gmgeneral}
In this section, we study the category $\Rep\Gm^n$.
Recall that $\Irr\Gm^n$ consists of the characters $$\chi^d\colon\Gm^n\to\Gm,\quad (x_1,\dots,x_n)\mapsto x_1^{d_1}\cdot\ldots\cdot x_n^{d_n},\quad d=(d_1,\dots,d_n)\in\Z^n,$$
because any irreducible representation of a LDAG can be given by polynomials (without any derivatives involved) by \cite[Theorem~3.3]{diffreductive}, and, therefore, \cite[Proposition~8.5]{Borel} gives the result.
We will regard $\Gm^n$ as a subgroup of $\GL_{2n}$, so that its coordinate ring is (due to~\cite[Lemma~3.4]{Buium1993}, we do not have to take the radical) $$A:=\K\{\Gm^n\}=\K\{x_1,y_1,\ldots,x_n,y_n\}/[x_1y_1-1,\ldots,x_ny_n-1].$$  

\begin{proposition}
\label{splitting}
Every element of $\Irr\Gm^n$ is splitting (see Definition~\ref{def:JHsplitting}).
\end{proposition}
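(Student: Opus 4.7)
The plan is to refine the decomposition of $V$ as a $\Gm^n(\Const)$-module, provided by Proposition~\ref{subquot}, into a decomposition as a differential $\Gm^n$-module; the splitting property will then follow immediately. Explicitly, applying Proposition~\ref{subquot} to the reductive group $\Gm^n \subset \GL_n$ defined over $\Q$ yields
$$
V = \bigoplus_{e \in \Z^n} V_e, \qquad V_e := \bigl\{v \in V \colon t \cdot v = \chi^e(t) v \text{ for all } t \in \Gm^n(\Const)\bigr\}.
$$
The two things I need to verify are that every $V_e$ is stable under the comodule map $\rho : V \to V \otimes A$ (with $A = \K\{\Gm^n\}$), and that each $V_e$ is $\{\chi^e\}$-isotypic.

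The main step, and the place where abelianness of $\Gm^n$ is essential, is the first. For any $\partial$-$\K$-algebra $R$ and any $g \in \Gm^n(R)$, every $t \in \Gm^n(\Const) \subset \Gm^n(R)$ commutes with $g$, so the corresponding $R$-linear operators on $V \otimes_\K R$ commute. For $v \in V_e$ this gives $t \cdot (g \cdot v) = g \cdot (t \cdot v) = \chi^e(t)(g \cdot v)$, so $g \cdot v$ lies in the $\chi^e$-weight space of $V \otimes_\K R$ under $\Gm^n(\Const)$, which coincides with $V_e \otimes_\K R$ because weight decompositions are preserved by base change. Specializing to $R = A$ and $g = \Id_A \in \Gm^n(A)$, the universal point, recovers the comodule map and yields $\rho(V_e) \subset V_e \otimes A$. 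Hence $V = \bigoplus_e V_e$ is a decomposition in $\Rep \Gm^n$.

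For the second verification, any simple $\Gm^n$-subquotient of $V_e$ is algebraic by Proposition~\ref{subquot}, hence of the form $\chi^{e'}$; restricted to $\Gm^n(\Const)$ it must agree with $\chi^e$, and Zariski density of $\Gm^n(\Const)$ in $\Gm^n$ then forces $e' = e$. Given any $d \in \Z^n$, the decomposition $V = V_d \oplus \bigoplus_{e \neq d} V_e$ exhibits $\{\chi^d\}$ as splitting in the sense of Definition~\ref{def:JHsplitting}. The main obstacle I anticipate is clarifying the Yoneda-style specialization in the second paragraph, which converts the functorial identity $tg = gt$ into the comodule inclusion $\rho(V_e) \subset V_e \otimes A$; commutativity of $\Gm^n$ is essential here, since for non-abelian LDAGs the weight spaces of a central subgroup need not be preserved by the full differential action, so the same argument cannot be expected to work beyond the torus case.
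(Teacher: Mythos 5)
Your proof is correct and follows essentially the same route as the paper: decompose $V$ into its $\Gm^n(\Const)$-weight spaces via Proposition~\ref{subquot}, then use commutativity to see that each weight space is a differential submodule. The paper compresses your functor-of-points argument for $\rho(V_e)\subset V_e\otimes_\K A$ into the single sentence ``Since $G$ is commutative, $V(d)$ is $G$-invariant,'' and likewise leaves the isotypicity check implicit, but the substance is identical.
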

\begin{proof}
Set $G:=\Gm^n$. Let $V$ be a $G$-module. It follows from Proposition~\ref{subquot} that 
$$V = \bigoplus_{d\in\Z^n}V_d,\quad V(d):=\left\{v\in V\:\big|\: g(v)=\chi^d(g)v\ \; \text{for all }g\in G(\Const)\right\}.$$
Since $G$ is commutative, $V(d)$ is $G$-invariant.
We conclude that $V$ is the direct sum of its $\chi^d$-isotypic components $V(d)$ for all $d\in\Z^n$.
\end{proof}

Consider the logarithmic derivative homomorphism (see~\cite[page~924]{Cassidy} and~\cite[page~648]{SitSL2}): $$\lambda : \Gm^n\to\Ga^n,\quad (x_1,\dots,x_n)\mapsto \left(x_1'y_1,\dots,x_n'y_n\right).$$ For every representation $\alpha : \Ga^n\to\GL(V)$, we have the representation $\alpha\circ\lambda : \Gm^n\to\GL(V)$.

\begin{theorem}\label{thm:tori}
\label{rep_Gm}
Any differential representation $\beta: \Gm^n\to\GL_r$ is isomorphic to the direct sum of its $\chi^d$-isotypic components $$\beta_d: \Gm^n\to\GL_{r_d},\quad  x\mapsto\chi^d(x)\cdot\alpha_d(\lambda(x)),$$ where $\alpha_d : \Ga^n\to\GL_{r_d}$ is a LDAG homomorphism and $d\in\Z^n$. Representations $$\beta,\:\beta'\colon\Gm^n\to\GL_{r_d}$$ are equivalent if and only if the corresponding $\alpha_d,\alpha'_d$ are equivalent for all $d\in \Z^n$ (see Proposition~\ref{unip_diff}).
\end{theorem}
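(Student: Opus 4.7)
The plan is to first decompose $V$ into its $\chi^d$-isotypic components, and then on each component to ``twist away'' the character $\chi^d$ so that what remains factors through the logarithmic derivative $\lambda$ and becomes a representation of $\Ga^n$ to which Proposition~\ref{unip_diff} applies.

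First, I would invoke Proposition~\ref{splitting}, which asserts that every $\chi^d\in\Irr\Gm^n$ is splitting; running through all $d\in\Z^n$ gives the canonical direct-sum decomposition $V=\bigoplus_{d\in\Z^n}V(d)$ into $\chi^d$-isotypic, $\Gm^n$-invariant submodules. This reduces the theorem to a single isotypic block $\beta_d:\Gm^n\to\GL_{r_d}$.

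Second, fix $d$ and define $\gamma_d(x):=\chi^d(x)^{-1}\beta_d(x)$. Since the scalar $\chi^d(x)\cdot\Id$ is central in $\GL_{r_d}$, the twist $\gamma_d$ is again a LDAG homomorphism. Its restriction to $\Gm^n(\Const)=\Ker\lambda$ is trivial: by Proposition~\ref{subquot}, the module $V(d)$, viewed over $\Gm^n(\Const)$, is a direct sum of simple submodules each isomorphic to $\chi^d$, so $\beta_d$ acts there by the scalar $\chi^d$, which is cancelled by $\chi^d(x)^{-1}$ in $\gamma_d$. Next I would use that $\lambda:\Gm^n\to\Ga^n$ is a surjective LDAG homomorphism with kernel $\Gm^n(\Const)$, where surjectivity on $\U$-points amounts to solving $y'=ay$ in the differentially closed $\U$. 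The universal property of quotients in the category of LDAGs then provides a unique LDAG homomorphism $\alpha_d:\Ga^n\to\GL_{r_d}$ with $\gamma_d=\alpha_d\circ\lambda$, so that $\beta_d(x)=\chi^d(x)\alpha_d(\lambda(x))$; the explicit shape of $\alpha_d$ is read off from Proposition~\ref{unip_diff}.

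For the equivalence statement, a conjugator $Q\in\GL_r(\K)$ between $\beta$ and $\beta'$ must preserve the canonical isotypic decomposition (hence is block-diagonal), and on each block it commutes through the central scalar $\chi^d(x)\cdot\Id$; surjectivity of $\lambda$ on $\U$-points then forces $Q\alpha_d Q^{-1}=\alpha_d'$ for every $d$, while the converse is clear by block-diagonal conjugation. I expect the main obstacle to be the descent of $\gamma_d$ through $\lambda$: formally this is the universal property of LDAG quotients, but it rests on the two nontrivial ingredients that $\Ker\lambda=\Gm^n(\Const)$ is Kolchin-closed and normal and that $\lambda$ is surjective on $\U$-points. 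Everything else is a combination of Propositions~\ref{splitting}, \ref{subquot}, and~\ref{unip_diff} with elementary linear algebra on each isotypic block.
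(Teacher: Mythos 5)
Your proposal is correct and follows essentially the same route as the paper: decompose via Proposition~\ref{splitting}, twist the $\chi^d$-isotypic block by $\chi^{-d}$, observe that the twisted representation kills $\Gm^n(\Const)=\Ker\lambda$ (the paper phrases this as ``unipotent and diagonalizable, hence trivial,'' which is the same content as your appeal to Proposition~\ref{subquot}), and then factor through the surjection $\lambda$ by the homomorphism theorem for LDAGs. The equivalence statement is handled in your sketch exactly as the paper intends.
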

\begin{proof}
By Proposition \ref{splitting}, $\beta$ is the direct sum of its isotypic components. Hence, we may assume that $\beta$ is $\chi^d$-isotypic for some $d\in\Z^n$. Moreover, tensoring $\beta$ with $\chi^{-d}$, we may assume that $d=0$. Then the image of $\beta$ consists of unipotent matrices. Since $\beta(\Gm^n(\Const))$ is diagonalizable, we have $\Gm^n(\Const)=\Ker\lambda\subset\Ker\beta$. Since $\lambda$ is onto, the homomorphism theorem for LDAGs  \cite[page 917]{Cassidy} implies that $$\beta=\alpha\circ\lambda$$ for some LDAG homomorphism $\alpha : \Ga^n\to\GL_r$ (defined over $\K$). 
\end{proof}

\subsubsection{Action of $\Gm$ on differential polynomials.}\label{sec:Gmdiffpoly} What follows in this section will be further used in Section~\ref{sec:SL2}, in particular, in Lemma~\ref{lem:firstorder}, to characterize differential representations of $\SL_2$ that are extensions of two irreducible representations. We will additionally suppose that $\K$ has a non-constant element.

Let the group $\Gm$, with its differential Hopf algebra $\K\{z,1/z\}$, act on the differential polynomial algebra $P:=\K\{x,y\}$ via the comodule structure
$$\rho: P \to P\otimes_\K\K\{z,1/z\},\quad x\mapsto x\otimes z,\ y\mapsto y\otimes 1/z.$$ Let $M$ be the set of all terms (a term is a product of a coefficient from $\K$ and a monomial)  in $P$. For $f\in P$, denote  the set of all terms that are present in $f$ by $M(f)$. For $$0\ne a \in\K\quad \text{and}\quad \rho(f)=\sum_i f_i\otimes b_i,\ f_i\in P,\ b_i\in A,$$ we let
$$
\rho(f)(a) := \sum_i b_i(a)f_i \in P.
$$
For a term 
\begin{equation}\label{eq:monomial}
h=\alpha\cdot{\left(x^{(p_1)}\right)}^{m_1}\cdot\ldots\cdot{\left(x^{(p_k)}\right)}^{m_k} \cdot {\left(y^{(q_1)}\right)}^{n_1}\cdot\ldots\cdot{\left(y^{(q_t)}\right)}^{n_t},
\end{equation}
where $p_i,m_i,q_j,n_j$ are non-negative integers, $p_1<\ldots < p_k$, $q_1 <\ldots < q_t$, and $0\neq\alpha\in\K$, its \emph{weight} is, by definition, \begin{equation}\label{eq:wtdef}
\sum p_im_i+\sum q_jn_j.
\end{equation} We also set 
\begin{equation}\label{eq:dhdef}
d(h):=\sum_im_i-\sum_jn_j.
\end{equation}
The weight $\wt f$ of an element $f\in P$ is defined as the maximum over the weights of all $h\in M(f)$. Note that, for any $f\in P$, $\wt f = 0$ if and only if $f\in\K[x,y]$.

Let $S$ be the set of all finite sequences $u=(u_0,u_1,\dots)$ of non-negative integers. We define a total ordering on $S$ by
$$
u<v\quad\Longleftrightarrow\quad\text{for the maximal $i$ such that $u_i\neq v_i$, we have $u_i<v_i$}. 
$$  
The total ordering on $S\times S$ is defined by
$$
(u,\tilde u)<(v,\tilde v)\quad\Longleftrightarrow\quad\text{$\tilde u<\tilde v$ or ($\tilde u=\tilde v$ and $u<v$)}. 
$$
To every $h\in M$, we assign a pair $s(h)=(u,v)\in S\times S$, where $u_i$ (respectively, $v_i$) is the multiplicity in $h$ of the factor $x^{(i)}$ (respectively, $y^{(i)}$). 

Thus, we have established a bijection between $\overline M = M/\sim$ and $S\times S$, where the equivalence $h\sim f$ means $f=\alpha h$ for some $0\neq\alpha\in\K$. We transfer the total ordering from $S\times S$ to $\overline M$. For any $h,f\in M$, we write $h<f$, and say that $h$ is smaller than $f$, if $s(h)<s(f)$; see also \cite{ZobninEss} for differential monomial orderings. 

\begin{lemma}\label{lem:max}
For every $h\in M$ with $\wt h > 0$ and $a\in\K$ with $a'\neq 0$, we have $$\wt{\left(\rho(h)(a)-a^{d(h)}h\right)}=\wt(h) - 1.$$
Moreover, there exists  $$\tilde h\in M{\left(\rho(h)(a)-a^{d(h)}h\right)}$$ such that $\tilde h<h$ and, for all $f\in M$ with $f<h$ and $d(f)=d(h)$, we have either $f<\tilde h$ or $f\sim \tilde h$.  
\end{lemma}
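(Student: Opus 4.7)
The approach is to expand $\rho(h)(a)$ via the Leibniz rule, identify the leading contribution as $a^{d(h)}h$, and then analyse the subleading terms. Since $\rho$ is a differential $\K$-algebra homomorphism with $\rho(x)=x\otimes z$ and $\rho(y)=y\otimes z^{-1}$, iterated Leibniz yields
\[
\rho\bigl(x^{(p)}\bigr)(a)=\sum_{k=0}^{p}\binom{p}{k}a^{(p-k)}x^{(k)},\qquad \rho\bigl(y^{(q)}\bigr)(a)=\sum_{l=0}^{q}\binom{q}{l}(1/a)^{(q-l)}y^{(l)},
\]
and $\rho(h)(a)$ is the product of these factors raised to the multiplicities in~\eqref{eq:monomial}, with the scalar $\alpha$ out front. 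Choosing $k=p_i$, $l=q_j$ in every factor reassembles to $a^{d(h)}h$ exactly; every other choice ``demotes'' one or more factors and strictly decreases the weight.

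To establish the weight identity, I isolate the weight-$(\wt(h)-1)$ part of $\rho(h)(a)-a^{d(h)}h$: these terms come precisely from the single-step demotions, in which exactly one factor is replaced by its next sub-leading term in the Leibniz expansion. Each such contribution carries a coefficient of the form $m_i\,p_i\,a^{d(h)-1}a'$ or $n_j\,q_j\,a^{d(h)-1}(1/a)'$, which is non-zero in characteristic zero provided $a'\neq 0$ and the hypothesis $\wt(h)>0$ holds (the latter guaranteeing some $p_i\Ge 1$ or $q_j\Ge 1$). Distinct single-step demotions produce pairwise inequivalent monomials modulo $\sim$, ruling out cancellation, so the weight of the correction is exactly $\wt(h)-1$.

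For the moreover clause, I single out $\tilde h$ as the single-step demotion that is largest in the $S\times S$ ordering. Given that the pair ordering is dominated by the second coordinate and within $S$ the maximal differing index rules, the winning candidate is the demotion at the highest derivative order that keeps the dominant coordinate of $s$ as close to $s(h)$ as possible: if $p_k\Ge 1$, replace one copy of $x^{(p_k)}$ by $x^{(p_k-1)}$ (so that the $y$-part, hence $v$, is unchanged); otherwise $\wt(h)>0$ forces $q_t\Ge 1$, and we instead replace one $y^{(q_t)}$ by $y^{(q_t-1)}$. Comparing multiplicity sequences gives $\tilde h<h$ in each case, because the relevant coordinate of $s(\tilde h)$ differs from that of $s(h)$ only at the demoted index (where it is smaller by one) and at its immediate predecessor.

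Extremality is the technical heart of the proof. Given $f\in M$ with $f<h$ and $d(f)=d(h)$, I case-split on whether the dominant coordinate of $s(f)$ equals that of $s(h)$ or is strictly smaller; within each case the relation $\sum u_f-\sum v_f=d(h)$ controls the multiplicity totals, and the max-differing-index rule pins down the admissible shapes of $(u_f,v_f)$. The key observation is that the minimal strictly-decreasing move in $S$ is precisely ``transfer one unit of multiplicity from the maximal nonzero index down to its immediate predecessor,'' and among all such transfers compatible with $d(f)=d(h)$, the one realised by $\tilde h$ is maximal in the pair order. The main obstacle I expect is coordinating the dominant-coordinate change with the $d$-constraint when the latter forces a compensating adjustment in the other coordinate; this is where the distinction between the $p_k\Ge 1$ and $p_k=0$ cases becomes essential, and where the use of the specific paper's ordering (rather than a more customary orderly ordering) has to be exploited carefully.
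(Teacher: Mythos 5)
Your Leibniz expansion and your proof of the weight identity are sound and essentially the same as the paper's: the weight-$(\wt(h)-1)$ part of $\rho(h)(a)-a^{d(h)}h$ is exactly the sum of the single-order demotions, these are pairwise inequivalent monomials, and their coefficients are nonzero multiples of $a'$, so no cancellation occurs. The error is in your choice of $\tilde h$ for the ``moreover'' clause. The ordering on $S$ compares two multiplicity sequences at the \emph{largest} index where they differ, so lowering a multiplicity at a \emph{higher} derivative order produces a \emph{smaller} element of $\overline M$; consequently the maximal single-step demotion is the one performed at the \emph{lowest} positive derivative order, which is what the paper does (``we may assume that $i$ is the smallest index with this property''), whereas you demote at the highest order $p_k$. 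Concretely, take $h=x'x'''$, so $d(h)=2$ and the $u$-sequence of $h$ is $(0,1,0,1)$. Your $\tilde h\sim x'x''$ has $u$-sequence $(0,1,1,0)$, while $f=xx'''$ has $u$-sequence $(1,0,0,1)$ and satisfies $f<h$ (last disagreement at index $1$), $d(f)=d(h)$, and $f>\tilde h$ (last disagreement at index $3$); moreover $f$ occurs in $\rho(h)(a)-a^{2}h$ with coefficient $aa'\neq 0$. So your $\tilde h$ fails the required extremality outright. The same miscalibration invalidates your ``key observation'': transferring a unit of multiplicity down from the \emph{maximal} nonzero index is the \emph{smallest} single-step demotion under this ordering, not the minimal decreasing move.

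A second, independent problem is that the extremality over \emph{all} $f\in M$ with $f<h$ and $d(f)=d(h)$ is only sketched in your last paragraph (``case-split \dots pins down the admissible shapes''), and this is the genuinely hard part of the lemma: be aware that, read literally, the clause fails even for the corrected choice of $\tilde h$. For $h=xx''$ one gets $\tilde h\sim xx'$, yet $f=(x')^2$ has $d(f)=2$ and $xx'<(x')^2<xx''$ (the last disagreements are at indices $1$ and $2$, respectively), so $f\not<\tilde h$ and $f\not\sim\tilde h$, while no term of $\rho(h)(a)-a^2h$ dominates $(x')^2$. Hence no case analysis over arbitrary $f\in M$ can close the argument; the comparison must be restricted to the monomials that actually arise in the application (Lemma~\ref{lem:free}), or the statement reformulated. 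The paper's own proof compresses this step into the sentence ``the rest of the properties of $\tilde h$ follow from its definition,'' so this is a place that demands more care, not less.
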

\begin{proof}
Suppose $h$ is given by~\eqref{eq:monomial} and there is an index $i$ with $p_i>0$. Then we may assume that $i$ is the smallest index with this property. Let $$h_i = \frac{h}{{\left(x^{(p_i)}\right)}^{m_i}}.$$ We set 
$$
\tilde h = \alpha\cdot m_i\cdot p_i\cdot a^{d-1}\cdot a'\cdot {\left(x^{(p_i)}\right)}^{m_i-1}\cdot x^{(p_i-1)}\cdot h_i,\quad d=d(h).
$$
We have
$$
\rho(h)(a) = \rho(h_i)(a)\cdot {\left((ax)^{(p_i)}\right)}^{m_i}= a^d h +\tilde{h}+\ldots,
$$
where $\ldots$ is a sum of terms that are smaller than $\tilde h$ and have weights $<\wt h$. The rest of the  properties of $\tilde h$ follow  from its definition. In the case when all $p_i$'s are zeros, since $\wt h>0$, there is an index $j$ with the property $q_j>0$. Then we choose the smallest such $j$ and define $\tilde h$ by replacing $x$ by $y$, $i$ by $j$, $p$ by $q$, and $m$ by $n$.  
\end{proof}

\begin{lemma}\label{lem:free} Let $\K$ have a non-constant element $a$.
If $V\subset P$ is a $\Gm$-submodule containing an element with positive weight $w$, then $V$ also contains an element with weight $w-1$.
\end{lemma}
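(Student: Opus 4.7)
The plan is to apply the $\Gm$-action at a non-constant element of $\K$ and use Lemma~\ref{lem:max} to produce from $f$ a new element of $V$ whose weight has dropped by exactly one. As a preliminary step, I would reduce to the isotypic case: the $\K$-algebra $P$ is graded by $d$-value as $P = \bigoplus_{d \in \Z} P(d)$, with $P(d)$ spanned by terms $h$ with $d(h) = d$; each $P(d)$ is $\Gm$-stable since $\Gm(\Const)$ acts on it through the character $\chi^d$. A Vandermonde argument applied to the identities $\rho(f)(c) = \sum_d c^d f_d$ for $c \in \Q^\times \subset \Gm(\K) \cap \Gm(\Const)$ then shows that the $d$-homogeneous components $f_d$ of $f \in V$ again lie in $V$. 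Because $\wt f$ is the maximum over its terms and respects the $d$-grading, some $f_d$ has weight $w$, so I may assume from the start that $f \in V(d) := V \cap P(d)$ for a single $d \in \Z$.

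Next, pick $a \in \K$ with $a' \neq 0$ (guaranteed by hypothesis) and set $g := \rho(f)(a) - a^d f$; this lies in $V$ because both $\rho(f)(a)$ and $a^d f$ do. Writing $f = \sum_\mu c_\mu \mu$ as a sum of terms with $d(\mu) = d$, Lemma~\ref{lem:max} gives $\wt\bigl(\rho(\mu)(a) - a^d \mu\bigr) = \wt(\mu) - 1$ whenever $\wt \mu > 0$, and this reduction vanishes entirely when $\wt \mu = 0$ (since then $\mu \in \K[x,y]$ and $\rho(\mu)(a) = a^{d(\mu)}\mu$). Summing, one immediately obtains $\wt g \Le w - 1$.

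The main obstacle is to promote this upper bound to an equality, ruling out the possibility that the weight-$(w-1)$ contributions from the various weight-$w$ terms of $f$ cancel. To address this, I would single out the largest weight-$w$ monomial $\mu^*$ of $f$ under the total order on $\overline M$, and track the distinguished term $\tilde{\mu^*}$ furnished by Lemma~\ref{lem:max} applied to $\mu^*$. The ordering property of $\tilde h$ in that lemma, combined with the fact that every term of $f$ shares the $d$-value $d$, is engineered precisely so that every term arising in $\rho(\mu)(a) - a^d \mu$ for the other weight-$w$ monomials $\mu < \mu^*$ sits strictly below $\tilde{\mu^*}$ in the order, while weight-$(<w)$ terms of $f$ contribute nothing of weight $w-1$ at all. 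Consequently the coefficient of $\tilde{\mu^*}$ in $g$ reduces to $c_{\mu^*}$ times a nonzero multiple of $a'$, so $\tilde{\mu^*}$ survives as a genuine term of $g$ of weight exactly $w-1$, producing the required element of $V$. Nailing down the no-cancellation step is the delicate part, and this is where the fine structure of the total order on $\overline M$ together with the hypothesis $a' \neq 0$ enter decisively.
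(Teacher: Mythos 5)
Your proof follows essentially the same route as the paper's: reduce to a single $d$-isotypic component (the paper invokes diagonalizability of the $\Gm(\Const)$-action where you use an explicit Vandermonde argument, but these are the same reduction), form $\rho(f)(a)-a^d f$, and isolate the distinguished term $\tilde{\mu^*}$ attached to the maximal weight-$w$ term via the ordering clause of Lemma~\ref{lem:max} to rule out cancellation. The no-cancellation step you flag as delicate is exactly the one the paper carries out, and your sketch of it (lower-weight terms contribute only weight $\le w-2$; terms coming from weight-$w$ monomials $\mu<\mu^*$ lie strictly below $\tilde{\mu^*}$ in the order) is the correct argument.
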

\begin{proof}
Since $\Gm(\Const)$ is an algebraic torus, $$V=\bigoplus_{d=-\infty}^\infty V(d),\quad V(d):=\left\{v\in V\:\big|\: \rho(v)(b)=b^dv\ \; \text{for all }0\neq b\in\Const\right\}.$$  By the assumption, there exists $f\in P$ with $\wt f = w$. Hence, there is  $h\in M(f)$ such that $\wt h = w$. Since $h$ is a term, $$h\in V(d(h)),$$ see~\eqref{eq:dhdef}. Then the sum of all terms in $M(f)$ lying in $V(d(h))\subset V$ has weight $w$. 

Now suppose that $f\in V(d)$ and $\wt f = w$. We claim that $$\wt\left(\rho(f)(a)-a^df\right)=w-1.$$ Indeed, let $f_w$ be the sum of the elements of $M(f)$ of weight $w$. We have
$$
f=f_w+f_{<w},
$$   
where $f_{<w}$ is the sum of the elements of $M(f)$ of weight $\Le w-1$.
Let $h$ be the maximal element of $M(f_w)$ and $g=f_w-h$. 
We have
$$
\rho(f)(a)-a^df=\left(\rho(h)(a)-a^dh\right)+\left(\rho(g)(a)-a^dg\right)+\left(\rho(f_{<w})(a)-a^df_{<w}\right).
$$
Let $$\tilde h\in M{\left(\rho(h)(a)-a^dh\right)}$$ be the element defined by Lemma~\ref{lem:max}. Then $\wt\tilde h=w-1$. We will show that $\tilde h$ is not equivalent to an element of $$M{\left({\left(\rho(g)(a)-a^dg\right)}+{\left(\rho{\left(f_{<w}\right)}(a)-a^df_{<w}\right)}\right)},$$ which will finish the proof. Let $$p \in M{\left(\rho{\left(f_{<w}\right)}(a)-a^df_{<w}\right)}.$$ By Lemma~\ref{lem:max}, $\wt p\le w-2$ and, therefore, $p\not\sim\tilde h$. Now let $$p\in M{\left(\rho(g)(a)-a^dg\right)}.$$ There exists $g_0\in M(g)$ such that $$p\in M{\left(\rho(g_0)(a)-a^dg_0\right)}.$$ Then $p<g_0<h$. By Lemma~\ref{lem:max}, either $g_0<\tilde h$ or $g_0\sim\tilde h$. In any case, then $p<\tilde h$.   
\end{proof}

\subsection{Main result: differential extensions of irreducible representations of $\SL_2$}\label{sec:SL2}
Theorem~\ref{thm:tori} shows that an extension of two non-isomorphic
irreducible representations of a torus splits. As we have seen in
Example~\ref{ex:reg}, this is not true for differential representations of $\SL_2$. In particular, one could form differential extensions of representations of different dimensions, and therefore, non-isomorphic. In this section, we will show how to handle this situation and provide a characterization of all differential $\SL_2$-modules that are extensions of any two irreducible $\SL_2$-modules.

 As announced in the introduction, in this section, we also {\it additionally suppose} that there exists $a \in \K$ with $a' \ne 0$. We need this extra assumption only in the proof of Lemma~\ref{lem:firstorder} below, which refers to Lemma~\ref{lem:free}, where this condition is explicitly used.
Our description will consist of several steps. We will call $\SL_2$ by $G$ from time to time.
Let (again, we do not have to take the radical due to \cite[Lemma~3.4]{Buium1993} and \cite{Mustata}) 
\begin{equation}\label{eq:ABP}
C=\K\{c_{ij}\}_{1\Le i,j\Le 2},\ \  \ \det=c_{11}c_{22}-c_{12}c_{21},\ \ \ A=\K\{G\}=C/[\det-1],\ \  \ B= C/[\det],\ \ \ P=\K\{x,y\}
\end{equation}
with the action of $\SL_2$ derived from the one given in Example~\ref{ex:reg}.

The proof of the following lemma, which we will use in the proofs of Lemma~\ref{lem:degree} and Theorem~\ref{thm:embedintoB}, is due to M.~Kondratieva.
\begin{lemma}\label{lem:detprime} The differential ideal $\left[\det'\right] \subset C$ is prime (see~\eqref{eq:ABP}).
\end{lemma}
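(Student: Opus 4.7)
The plan is first to localize at $c_{22}$ and replace $[\det']$ by a manifestly prime ideal in a simpler differential polynomial ring, and then to show $c_{22}$ is a non-zero-divisor modulo $[\det']$ so that the primality descends.

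For the first step, I introduce the differential polynomial ring $D=\K\{u,c_{12},c_{21},c_{22}\}$ on four fresh differential indeterminates and consider
$$\varphi\colon D\longrightarrow C,\qquad u\mapsto\det,\ \ c_{ij}\mapsto c_{ij}\ \text{for}\ (i,j)\ne(1,1).$$
The relation $c_{11}c_{22}=u+c_{12}c_{21}$ lets us recover $c_{11}=(u+c_{12}c_{21})/c_{22}$ in $D[c_{22}^{-1}]$, and by differentiating, all higher $c_{11}^{(\ell)}$ as well; this provides a differential isomorphism $D[c_{22}^{-1}]\xrightarrow{\sim}C[c_{22}^{-1}]$ sending $[u']$ to $[\det']$. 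Since the quotient
$$D/[u']\;\cong\;\K[u]\otimes_{\K}\K\{c_{12},c_{21},c_{22}\}$$
is a polynomial ring in countably many variables over $\K$, hence an integral domain, $[u']$ is prime in $D$ and so $[\det']\cdot C[c_{22}^{-1}]$ is prime in $C[c_{22}^{-1}]$. In particular, its contraction $P_0:=[\det']\colon c_{22}^\infty$ is a prime differential ideal of $C$ containing $[\det']$.

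For the second step, suppose $c_{22}h=\sum_{k=0}^n A_k\det^{(k+1)}$ in $C$. By Leibniz, $\det^{(k+1)}=c_{22}c_{11}^{(k+1)}+R_k$, where $R_k$ does not involve $c_{11}^{(k+1)}$; substituting gives
$$c_{22}\bigl(h-\textstyle\sum_k A_k c_{11}^{(k+1)}\bigr)=\sum_k A_k R_k,$$
so $c_{22}$ divides $\sum_k A_k R_k$ in the polynomial ring $C$, and reducing modulo $c_{22}$ yields $\sum_k \bar A_k\,\overline{\det^{(k+1)}}=0$ in $C/(c_{22})$. The key claim is that this identity forces $\bar A_k=0$ for each $k$; once this is in hand, $A_k=c_{22}B_k$ for some $B_k\in C$, and a short rearrangement produces $h=\sum_k B_k\det^{(k+1)}\in[\det']$, which proves $P_0=[\det']$ and completes the proof.

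The linear-independence claim is the main obstacle. The Leibniz formula shows that $\overline{\det^{(k+1)}}$ contains the monomial $c_{22}^{(k+1)}$ with coefficient $c_{11}$, and for $k\le n$ the variable $c_{22}^{(n+1)}$ appears in $\overline{\det^{(k+1)}}$ only at $k=n$. Viewing the identity as a polynomial identity in the polynomial ring $C/(c_{22})$ and extracting the coefficient of $c_{22}^{(n+1)}$ (after inductively stripping off any dependence of the $\bar A_k$ on these top-order variables via matching higher powers) forces $\bar A_n\cdot c_{11}=0$, hence $\bar A_n=0$; iterating downward gives $\bar A_k=0$ for all $k$. The bookkeeping in this coefficient comparison is the delicate point, since a priori the $\bar A_k$ themselves may involve $c_{22}^{(n+1)}$, but the essential fact is that $\overline{\det^{(k+1)}}$ introduces its own fresh top-order variable $c_{22}^{(k+1)}$ with the non-zero coefficient $c_{11}$, and these top-order variables for distinct values of $k$ are distinct.
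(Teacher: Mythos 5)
Your first step is correct, and it is in fact a nice self-contained substitute for the paper's appeal to the Ritt--Kolchin theorem on the general component of an irreducible differential polynomial: the differential isomorphism $D[c_{22}^{-1}]\cong C[c_{22}^{-1}]$ with $u\mapsto\det$ does show that ${\left[\det{}'\right]}:c_{22}^\infty$ is a prime differential ideal (the paper saturates at $c_{11}$ rather than $c_{22}$, which is immaterial). The real content of the lemma, however, is your second step --- that the saturation adds nothing --- and there the argument has a genuine gap. The ``key claim'' that $\sum_k\bar A_k\,\overline{\det{}^{(k+1)}}=0$ in $C/(c_{22})$ forces every $\bar A_k=0$ is false, because the Koszul relations give nontrivial syzygies. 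For instance, $A_0=\det{}''$, $A_1=-\det{}'$, $A_k=0$ for $k\Ge 2$ satisfy
$$
\bar A_0\,\overline{\det{}'}+\bar A_1\,\overline{\det{}''}=\overline{\det{}''}\cdot\overline{\det{}'}-\overline{\det{}'}\cdot\overline{\det{}''}=0,
$$
yet $\bar A_0\neq 0$ (the monomial $2c_{11}'c_{22}'$ of $\det{}''$ survives modulo $c_{22}$). This example also shows exactly why the coefficient extraction you sketch cannot be repaired: here $\bar A_1$ involves the top-order variable $c_{22}''$ with coefficient $-c_{11}$, and its contribution cancels the contribution of the monomial $c_{11}c_{22}''$ of $\overline{\det{}''}$ multiplied by $\bar A_0$, so extracting the coefficient of $c_{22}^{(n+1)}$ yields $0=0$ rather than $\bar A_n=0$. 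The ``delicate bookkeeping'' you flag is precisely where the argument breaks.

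What you actually need is weaker and would suffice: every syzygy of $\overline{\det{}'},\dots,\overline{\det{}^{(n+1)}}$ over $C/(c_{22})$ is a combination of Koszul syzygies. Indeed, if $\bar A_k=\sum_j\bar B_{kj}\,\overline{\det{}^{(j+1)}}$ with $B_{kj}=-B_{jk}$, then $A_k-\sum_jB_{kj}\det{}^{(j+1)}\in(c_{22})$, and substituting back into $c_{22}h=\sum_kA_k\det{}^{(k+1)}$ makes the antisymmetric part vanish and gives $h\in{\left[\det{}'\right]}$ after cancelling $c_{22}$ in the domain $C$. But ``all syzygies are Koszul'' is the vanishing of the first Koszul homology, i.e.\ essentially a regular-sequence statement about the $\overline{\det{}^{(k+1)}}$, and that is a genuine computation you have not supplied. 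The paper does this work by a Gr\"obner-basis elimination: under a suitable grevlex order the leading monomials of $\det{}',\dots,\det{}^{(q)}$ are pairwise coprime and coprime to $c_{11}$, so $\left(\det{}',\dots,\det{}^{(q)}\right):c_{11}^\infty=\left(\det{}',\dots,\det{}^{(q)}\right)$ for every $q$, whence ${\left[\det{}'\right]}:c_{11}^\infty={\left[\det{}'\right]}$. An argument of comparable strength must replace your key claim before the proof is complete.
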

\begin{proof}
We will first show that \begin{equation}\label{eq:detprimeequals}
\left[\det{}'\right] = \left[\det{}'\right]:c_{11}^\infty := \left\{f \in C\:|\:\text{there exists } n\Ge 0 \text{ such that } c_{11}^n\cdot f \in \left[\det{}'\right]\right\}.
\end{equation}
By definition, $\left[\det{}'\right] \subset \left[\det{}'\right]:c_{11}^\infty$. To show the reverse inclusion, we will prove that, for all $q \Ge 1$, we have
\begin{equation}\label{eq:detq}
\left[\det{}'\right] \supset \left(\det{}',\det{}'',\ldots,\det{}^{(q)}\right):c_{11}^\infty.
\end{equation}
To show~\eqref{eq:detq}, it is enough to prove that, for all $q \Ge 1$,
$$
I  \cap C= \left(\det{}',\det{}'',\ldots,\det{}^{(q)}\right),\quad I := \left(\det{}',\det{}'',\ldots,\det{}^{(q)},1-t\cdot c_{11}\right)\cdot C[t].
$$
For this, it is enough to show that the set of elements of a Gr\"obner basis of $I$ not depending on $t$ with respect to a monomial ordering such that $t >_{\mathrm{lex}}$ than any other variable (any ordering that eliminates $t$) is equal
to  $$G := \det{}',\det{}'',\ldots,\det{}^{(q)},$$
\cite[Exercise~4.4.9]{CoxLittleOShea}. To do this, we choose the grevlex monomial ordering \cite[Definition~2.2.6]{CoxLittleOShea} on $C[t]$ with $$t>c_{22}^{(q)}>c_{21}^{(q)}>c_{12}^{(q)}>c_{11}^{(q)}>\ldots>c_{22}>c_{21}>c_{12}>c_{11}.$$
Since, for all $i$, $1\Le i\Le q$, the leading monomial of $\det{}^{(i)}$ is 
$$
\begin{cases}
c_{11}^{(k+1)}\cdot c_{22}^{(k)}&  i=2k+1,\ k\Ge 0;\\  
c_{12}^{(k)}\cdot c_{21}^{(k)}& i =2k,\ k\Ge 1,
\end{cases}
$$
we conclude that the leading monomials in $\tilde{G} := G \cup \left\{1-t\cdot c_{11}\right\}$ are relatively prime. Therefore, $\tilde{G}$ is a Gr\"obner basis of $I$ by \cite[Theorem~2.9.3 and Proposition~2.9.4]{CoxLittleOShea} and $\tilde{G}\cap C = G$. Thus, we have~\eqref{eq:detprimeequals}.

Finally, since $\det{}'$ is an irreducible differential polynomial, \cite[Lemma~IV.9.2]{Kol} implies that $[\det{}']:c_{11}^\infty$ is a prime differential ideal (see also \cite[Theorem~4.7]{HubertEssential}).
\end{proof}

\begin{definition}
For  $f\in A$,  denote the smallest degree (the total degree when considered as a polynomial) of a representative in  $C$ by $\deg f$, which we also call the \emph{degree} of $f$. Similarly, we define the degree of $f\in B$. 
\end{definition}
\begin{remark}Note that $$A_{\Le d} := \Span_\K\{f\in A\:|\: \deg f\Le d\}$$ 
 is a $G$-module.
 \end{remark}

\begin{definition}
For  $w\in W\in\Ob(\Rep G)$, the degree $\deg w$ is the smallest $d\Ge 0$ such that,  for the comodule map $\rho_W : W\to W\otimes_\K A$, we have $$\rho_W(w) \in W\otimes_\K A_{\Le d}.$$ 
\end{definition}
The following lemma shows that, in the case $W\subset A$, our definitions of degree agree. We will use the notations $\pi_A$ and $\pi_B$ for the quotient maps $C\to A$ and $C\to B$, respectively. Let $C_d\subset C$ be the submodule of homogeneous differential polynomials of degree $d$ (considered as the usual polynomials) and $$C_{\Le d}=\bigoplus_{i=0}^dC_i.$$ We have $\pi_A(C_{\Le d})=A_{\Le d}$.
 
\begin{lemma}\label{lem:degree}
For the comultiplication $\Delta: A\to A\otimes_\K A$, the following hold:
\begin{equation}\label{eq:delta}
\Delta(A_{\Le d})\subset A_{\Le d}\otimes_\K A_{\Le d}
\end{equation}
and
\begin{equation}\label{eq:preimage}
\Delta^{-1}(A_{\Le d}\otimes_\K A_{\Le d-1}+A_{\Le d-1}\otimes_\K A_{\Le d})=A_{\Le d-1}.
\end{equation}
\end{lemma}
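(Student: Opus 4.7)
The plan is to work in the free $\partial$-algebra $C = \K\{c_{ij}\}_{1\Le i,j\Le 2}$, so that $A = C/[\det-1]$ and $B = C/[\det]$, and to lift $\Delta$ to the differential ring homomorphism $\tilde\Delta\colon C\to C\otimes C$, $c_{ij}\mapsto\sum_k c_{ik}\otimes c_{kj}$. The identity $\tilde\Delta(\det-1) = (\det-1)\otimes\det + 1\otimes(\det-1)\in [\det-1]\otimes C + C\otimes[\det-1]$ guarantees the descent to $A$. For (1), I would show $\tilde\Delta$ preserves total polynomial degree: by Leibniz, $\tilde\Delta(c_{ij}^{(n)}) = \sum_k\sum_{m=0}^n\binom{n}{m}c_{ik}^{(m)}\otimes c_{kj}^{(n-m)}$, a sum of bidegree-$(1,1)$ tensors, so $\tilde\Delta(C_{\Le d})\subset C_{\Le d}\otimes C_{\Le d}$ by multiplicativity, and passing to the quotient gives \eqref{eq:delta}.

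For (2), the inclusion $\supset$ is immediate from (1). For the reverse, my plan is to pass to the associated graded $\mathrm{gr}_d A := A_{\Le d}/A_{\Le d-1}$ and identify it with $B_d := C_d/(C_d\cap[\det])$. Concretely, $f\in A_{\Le d}$ is sent to $[\tilde f_d]\in C_d/(C_d\cap[\det])$, where $\tilde f_d\in C_d$ is the top-degree component of any lift $\tilde f\in C_{\Le d}$; well-definedness and bijectivity come from analyzing which elements of $C_d$ arise as leading parts of $[\det-1]\cap C_{\Le d}$. The key input is the algebraic independence of $\{\det^{(i)}\}_{i\Ge 0}$ in $C$: writing $z\in [\det-1]\cap C_{\Le d}$ as $h_0(\det-1)+\sum_{i\Ge 1}h_i\det^{(i)}$ and demanding that every degree-$>d$ contribution cancels forces $h_i^{(d)}=0$ (lest the recurrence $h_0^{(e+2)} = h_0^{(e)}\det + \sum h_i^{(e)}\det^{(i)}$ produce an infinite tail), whence $z_d \in C_d\cap[\det]$. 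Under this identification, $\Delta$ induces the comultiplication $\bar\Delta\colon B\to B\otimes B$, $c_{ij}\mapsto\sum_k c_{ik}\otimes c_{kj}$. Given $\Delta(f)\in A_{\Le d}\otimes A_{\Le d-1} + A_{\Le d-1}\otimes A_{\Le d}$, projecting to $\mathrm{gr}_d A\otimes\mathrm{gr}_d A\cong B_d\otimes B_d$ annihilates both summands and hence yields $\bar\Delta_d(\bar f) = 0$, where $\bar f$ is the image of $f$ in $B_d$.

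The main obstacle is the injectivity of $\bar\Delta_d\colon B_d\to B_d\otimes B_d$; granting this, $\bar f = 0$ and $f\in A_{\Le d-1}$. Geometrically, $\bar\Delta$ is dual to the multiplication map on the Kolchin closed subset of $\U^4$ cut out by $[\det]$, namely the singular $2\times 2$ matrices, and the factorization $M = uv^T = (ue_1^T)(e_1v^T)$ of any rank-one singular matrix into two singular factors (together with $0 = 0\cdot 0$) shows this multiplication is surjective on $\U$-points. Combined with the radicality of $[\det]$ in $C$ --- established analogously to the radicality of $[\det-1]$ from \cite[Lemma~3.4]{Buium1993} (or via a saturation argument in the spirit of Lemma~\ref{lem:detprime}, realizing $B$ as a quotient of the domain $C/[\det']$ by the image of $\det$, which is a nonzero $\partial$-constant there) --- this guarantees $B$ is the reduced coordinate ring of that Kolchin closed set, so an element of $B$ is zero as soon as it vanishes identically on the multiplication image; consequently $\bar\Delta$ is injective, completing the proof.
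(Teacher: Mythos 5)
Your architecture is essentially the paper's: part (1) via $\Delta_C(C_d)\subset C_d\otimes_\K C_d$ and descent, and part (2) via a leading-term analysis of $[\det-1]$ combined with injectivity of the comultiplication of $B=C/[\det]$, the latter deduced exactly as in the paper from surjectivity of multiplication on singular $2\times 2$ matrices plus reducedness of $C/[\det]$. Packaging the top-degree layer as $\mathrm{gr}_dA\cong B_d$ is a clean reformulation of what the paper does with the projections $\pi_{ij}$ here and with the map $\alpha_d$ in Theorem~\ref{thm:embedintoB}, and your final descent from $\bar f=0$ in $B_d$ to $f\in A_{\Le d-1}$ is correct.

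The gap is in your justification of $\mathrm{gr}_dA\cong B_d$, i.e., of the claim that the degree-$d$ homogeneous component of any $z\in[\det-1]\cap C_{\Le d}$ lies in $[\det]$. Algebraic independence of $\{\det{}^{(i)}\}$ does not support the ``infinite tail'' argument: algebraically independent polynomials can satisfy nontrivial linear relations with coefficients in the ambient ring (e.g., $x^2$ and $xy$ are algebraically independent in $\K[x,y]$, yet $y\cdot x^2-x\cdot(xy)=0$), so your recurrence on homogeneous components can terminate without the top components of the $h_i$ vanishing. Indeed they need not vanish: $0=\det{}'\cdot(\det-1)-(\det-1)\cdot\det{}'$ is a representation of $0\in C_{\Le 0}$ in which the degree-$2$ component of $h_0$ is $\det{}'\neq 0$. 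What truncation of the tail actually yields is a relation of the form $(h_0)_M\cdot\det=-\sum_{i\Ge 1}(h_i)_M\det{}^{(i)}\in\left[\det{}'\right]$, where $(h)_M$ denotes the top homogeneous component; to conclude $(h_0)_M\in[\det]$ you must invoke the primality of $\left[\det{}'\right]$ together with $\det\notin\left[\det{}'\right]$ --- that is Lemma~\ref{lem:detprime}, which the paper proves precisely to make this step, and which you cite only in the aside about reducedness of $B$ (where, incidentally, ``quotient of the domain $C/\left[\det{}'\right]$ by a principal ideal'' does not by itself give reducedness either; the paper simply relies on \cite{Mustata}). With Lemma~\ref{lem:detprime} inserted at this point, your proof closes.
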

\begin{proof}
Here, we use the differential analogues \cite[Section~2]{CassidyRep} and \cite[Section~3]{OvchRecoverGroup} of the standard facts \cite[Sections~1.5,~3.2]{Waterhouse} on the relation between multiplicative structures on affine sets and bialgebra structures on their algebras of regular functions.
The group $G$ is a submonoid of the differential monoid $M$ of all $2\times 2$ matrices, defined similarly to Definition~\ref{def:LDAG}. This means that we have the following commutative diagram:
\begin{equation}\label{eq:diag1}
\begin{CD}
C @>\Delta_C>>C\otimes_\K C\\
@V{\pi_A}VV @VV{\pi_A\otimes\pi_A}V\\
A@>\Delta>>A\otimes_\K A 
\end{CD}
\end{equation}
where $\Delta_C$ is the comultiplication on the differential bialgebra $C$. For the generators $c_{ij}$, $1\Le i,j\Le 2$, of $C$, we have $$\Delta_C(c_{ij})=\sum_{k=1}^2c_{ik}\otimes c_{kj}.$$ This implies  
\begin{equation}\label{eq:note}
\Delta_C(C_d)\subset C_d\otimes_\K C_d,
\end{equation}
and, in view of \eqref{eq:diag1}, we obtain~\eqref{eq:delta}.  
Set $$I:=\Ker\pi_A=[\det-1]\quad \text{and}\quad J:=\Ker(\pi_A\otimes\pi_A)=I\otimes_\K C + C\otimes_\K I.$$ To prove \eqref{eq:preimage}, it suffices to show that if, for some $f\in C_{\Le d}$,  
\begin{equation}\label{eq:assume}
(\pi_A\otimes\pi_A)\Delta_C(f)\in A_{\Le d}\otimes_\K A_{\Le d-1}+A_{\Le d-1}\otimes_\K A_{\Le d},
\end{equation}
then $$f\in C_{\Le d-1}+I.$$ Moreover, since $C_{\Le d}=C_{\Le d-1}\oplus C_d$ and~\eqref{eq:note}, we only need to consider the case $f\in C_d$. Note that~\eqref{eq:note} and~\eqref{eq:assume} imply
\begin{equation}\label{eq:dcf}
\Delta_C(f)\in C_{\Le d}\otimes_\K C_{\Le d-1}+C_{\Le d-1}\otimes_\K C_{\Le d}+\tilde J,
\end{equation}
where $\tilde J := J\cap C_{\Le d}\otimes_\K C_{\Le d}$.
For the direct sum decomposition 
$$
C\otimes_\K C=\bigoplus_{i,j}C_{ij},\quad\text{where}\quad C_{ij}:=C_i\otimes_\K C_j,
$$
denote the projection onto $C_{ij}$ by $\pi_{ij}$. 
By \eqref{eq:note}, $$\Delta_C(f)=\pi_{dd}(\Delta_C(f)).$$ Then, by \eqref{eq:dcf}, we have $\Delta_C(f)\in\pi_{dd}{\left(\tilde J\right)}$, and, therefore,
\begin{equation}\label{eq:deltacf}
\Delta_C(f)=(\det-1)\cdot f_0\otimes a + b\otimes (\det-1)\cdot h_0 + g
\end{equation}
for some $$f_0,\;h_0,\;a,\;b\in C\quad \text{and}\quad g\in \left[\det{}'\right]\otimes_\K C + C\otimes_\K {\left[\det{}'\right]}.$$ Note that if $$\det\cdot f_0\in{\left[\det{}'\right]},$$ then, by Lemma~\ref{lem:detprime}, $$f_0\in{\left[\det{}'\right]}\subset[\det].$$ Hence, collecting terms of highest degree in~\eqref{eq:deltacf}, we obtain
\begin{equation}\label{eq:det}
\Delta_C(f)\in [\det]\otimes_\K C + C\otimes_\K [\det].
\end{equation}
We will show that then $f\in [\det]$, which means $$f=f_0\cdot\det+f_1\cdot\det{}'+\ldots+f_k\cdot\det{}^{(k)}$$ for some integer $k$ and $f_i\in C_{d-2}$, and, therefore, $$f\in f_0+I\subset C_{\Le d-2}+I\subset C_{\Le d-1}+I.$$ To this end, consider the (differential) subvariety $M_0\subset M$ of singular matrices. Since $M_0$ is closed under multiplication, the algebra $$\K\{M_0\}=C/[\det],$$ which is, again, reduced by \cite{Mustata}, inherits the comultiplication $\Delta_0$ from $C$. In other words, we have the commutative diagram
$$
\begin{CD}
C @>\Delta_C>>C\otimes_\K C\\
@V{\pi_0}VV @VV{\pi_0\otimes\pi_0}V\\
\K\{M_0\}@>\Delta_0>>\K\{M_0\}\otimes_\K \K\{M_0\} 
\end{CD}
$$
where $\pi_0$ is the quotient map. Then, in view of~\eqref{eq:det}, to prove $f\in [\det]$, it suffices to show that $\Delta_0$ is injective. Note that $\Delta_0$ is dual to the multiplication map $m_0:M_0\times M_0\to M_0$. Since every singular matrix in $M_0(\U)$ is a product of two singular matrices, $m_0(\U)$ is surjective and, therefore, $\Delta_0$ is injective.
\end{proof} 

A $G$-module $W$ is called \emph{homogeneous} if all its non-zero elements have the same degree. For $d$, $k \Ge 0$, let $P_d^k\subset P$ be the subspace spanned by the differential monomials of degree $d$ and weight $\le k$ (see~\eqref{eq:wtdef}). Note that all $P_d^k$ are $\SL_2$-invariant. We have
$$P_d^0=\Span_\K\left\{x^d, x^{d-1}y,\dots,y^d\right\}\subset P.$$ Let 
\begin{equation}\label{eq:UdWd}
U_d = \Span_\K\left\{P_d^0, \left(x^d\right)', \left(x^{d-1}y\right)',\dots,\left(y^d\right)'\right\}\subset P_d^1\quad\text{and}\quad W_d=P_d^0+(x'y-xy')\cdot P_{d-2}^0\subset P_d^1,
\end{equation} 
which are $\SL_2$-submodules with  
 $U_d$ being isomorphic to $F{\left(P_d^0\right)}$, the prolongation of $P_d^0$, see~\eqref{eq:prolongation}. 
\begin{theorem}\label{thm:main} Let $V$ be a differential representation of $\SL_2$ that is a non-split extension of two irreducible representations $V_1$ and $V_2$ of $\SL_2$, that is, there is a short exact sequence
$$
\begin{CD}
0 @>>> V_1 @>>> V @>>> V_2@>>> 0,
\end{CD}
$$ (hence, $V \in \Rep_0\SL_2$). Then there exists $d \Ge 1$ such that either $V$ or $V^\vee$ is isomorphic to either \begin{enumerate}
\item $U_d$, in which case $\dim V = 2d+2$, or
\item $W_d$, in which case $\dim V = 2d$.
\end{enumerate}
Moreover, $U_d^\vee\cong U_d$ and the $G$-modules $$U_d,\: W_d,\: W_d^\vee, \quad d\Ge 1,$$ form the complete list of pairwise non-isomorphic $G$-modules that are non-trivial extensions of simple modules. 
\end{theorem}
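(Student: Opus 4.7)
First I would dispose of the \emph{moreover} claim. Since $\dim U_d = 2d+2$ and $\dim W_d = \dim W_d^\vee = 2d$, modules with different values of $d$ are pairwise non-isomorphic by a dimension count. For fixed $d$, the socles of $W_d$ and $W_d^\vee$ have different dimensions ($d+1$ and $d-1$, respectively), so these two are non-isomorphic as well. For $U_d^\vee \cong U_d$, I would use the self-duality of every symmetric power $P_d^0$ (all $\SL_2$-irreducibles in characteristic zero are self-dual) together with the fact that the prolongation functor $F$ commutes with duality on self-dual modules, so $U_d = F(P_d^0)$ is self-dual.

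For the classification itself, the starting observation is that a non-split extension $0 \to V_1 \to V \to V_2 \to 0$ of simples has $V_1$ as both its unique minimal and its unique maximal proper submodule (otherwise any other maximal submodule would intersect $V_1$ trivially and thereby split $V$); hence $V \in \Rep_0\SL_2$. By Proposition~\ref{prop:reg}, both $V$ and $V^\vee$ embed $\SL_2$-equivariantly into $A = \K\{\SL_2\}$. By Proposition~\ref{subquot}, $V_1 \cong P_{d_1}^0$ and $V_2 \cong P_{d_2}^0$ for some $d_1, d_2 \Ge 0$, and the socle $V_1$ has a distinguished embedding into the subring $P = \K\{c_{11}, c_{12}\} \subset A$ identified with $\K\{x,y\}$, sending $P_{d_1}^0$ onto the span of degree-$d_1$ monomials in $c_{11}, c_{12}$. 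By composing with a projection onto the appropriate $P_{d_1}^0$-isotypic coordinate of $A^{\dim V}$ (Proposition~\ref{inj}), one arranges that the socle of $V$ (or of $V^\vee$) lands inside $P$.

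The crucial step is to show that, after this choice and possibly replacing $V$ by $V^\vee$, the entire image of $V$ lies in $P_d^1$ for some $d$. This is where Lemma~\ref{lem:firstorder} intervenes: applying Lemma~\ref{lem:free} iteratively via the restriction to the diagonal torus $\Gm \subset \SL_2$, any element of weight $w > 1$ in an $\SL_2$-submodule of $P$ would generate elements of all intermediate weights $w-1, w-2, \ldots, 0$, each contributing additional Jordan--H\"older factors that are incompatible with the length-$2$ structure of $V$. The non-constant element assumption on $\K$ is used precisely here. Hence $V \subset P_d^1$ for $d = d_1$, while the dual case handles $d_2 > d_1$.

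Once $V \subset P_d^1$ is established, the classification is direct. The socle of $P_d^1$ is $P_d^0$, and the quotient $P_d^1/P_d^0$ is semisimple with two simple constituents: another copy of $P_d^0$ (recovered as $F(P_d^0)/P_d^0 = U_d/P_d^0$) and a copy of $P_{d-2}^0$ (recovered from the Wronskian factor $(x'y-xy')\cdot P_{d-2}^0$, see~\eqref{eq:UdWd}). A non-split length-$2$ subextension of $P_d^1$ containing $P_d^0$ corresponds to lifting exactly one of these simple quotients, and a direct inspection shows the only lifts compatible with the $\SL_2$-action are $U_d$ and $W_d$. The third family $W_d^\vee$ arises for the dual case. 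The principal obstacle is the penultimate step: forcing the embedded image of $V$ to have weight at most $1$ rather than sitting in a higher-weight subspace of $P$ or involving higher-order derivatives of the $c_{ij}$, which relies on the weight-decrease mechanism of Lemmas~\ref{lem:free} and~\ref{lem:firstorder} and is the only place where the hypothesis on $\K$ is consumed.
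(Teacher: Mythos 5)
Your outline of the endgame (Lemma~\ref{lem:firstorder} and the analysis of $P_d^1$) and of the ``moreover'' clause tracks the paper, but the central reduction --- getting $V$ or $V^\vee$ inside $\K\{x,y\}$ in the first place --- is where your plan has a genuine gap. Proposition~\ref{prop:reg} only embeds $V$ into $A=\K\{\SL_2\}$, and the copy of $P_{d_1}^0$ carrying the image of the socle need not be the standard one inside $\K\{c_{11},c_{12}\}$ (for instance $\Span_\K\{c_{21}^ic_{22}^{d_1-i}\}$ is another such copy, and ``projecting onto an isotypic coordinate of $A^{\dim V}$'' is not an operation that Proposition~\ref{inj} supplies --- the coordinates there are copies of $A$, not isotypic pieces). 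More seriously, even when the socle does lie in $\K\{c_{11},c_{12}\}$, the rest of $V$ need not: the module $W$ of Example~\ref{ex:counterexample}, which is an extension of two irreducibles with socle $\K\cdot 1\subset\K\{c_{11},c_{12}\}$, admits no embedding into $\K\{x,y\}$ at all. So the ``$V$ or $V^\vee$'' dichotomy cannot be settled by choosing a good coordinate of $A^{\dim V}$; the paper settles it by a degree argument: either $V$ or $V^\vee$ is a \emph{homogeneous} submodule of $A$ (Theorem~\ref{thm:embedintoB}, which rests on Lemma~\ref{lem:degree} and on the primality of $\left[\det{}'\right]$ from Lemma~\ref{lem:detprime}); a homogeneous submodule descends along $A_{\Le d}\to B=C/[\det]$; and a submodule of $B$ with algebraic simple socle is then pushed into $\K\{x,y\}$ by the $\SL_2$-equivariant specialization $(x_1,y_1)\mapsto(\beta x,\beta y)$ for suitable $\beta$ (Lemma~\ref{lem:intoP}) --- a map that exists on $B$ but not on $A$, since it sends $\det$ to $0$. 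None of this machinery, which is the technical heart of the theorem, appears in your plan, and the weight-lowering Lemmas~\ref{lem:free} and~\ref{lem:firstorder} that you invoke only become applicable \emph{after} $V$ sits inside $\K\{x,y\}$.

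A smaller point: your dimension count in the ``moreover'' part does not separate $U_d$ from $W_{d+1}$ and $W_{d+1}^\vee$, all of which have dimension $2d+2$; you need the additional observation that the two simple subquotients of $U_d$ have equal dimensions while those of $W_{d+1}$ differ by $2$ (equivalently, compare socle dimensions). The remaining ingredients --- self-duality of $U_d$ via self-duality of $P_d^0$, the torus weight argument, and the identification of $U_d$ and $W_d$ as the only length-two submodules of $P_d^1$ containing $P_d^0$ (Proposition~\ref{prop:final}) --- are essentially the paper's.
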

\begin{proof}
The proof will consist of the following steps:
\begin{enumerate}
\item embed either $V$ or $V^\vee$ into $B$ using homogeneity, 
\item embed the result into $P$,
\item show that the result is actually inside $P_d^1$,
\item show that $P_d^1$ has only two submodules with simple socle ($U_d$ and $W_d$) that are non-split extensions of two irreducibles,
\end{enumerate} 
which are contained in Theorem~\ref{thm:embedintoB}, Lemmas~\ref{lem:intoP} and~\ref{lem:firstorder}, and Proposition~\ref{prop:final} that follow.

 The last statement of the theorem can be then shown as follows. Since the $G$-module $P_d^0$ is self-dual~\cite[Theorem 7.2]{HumphreysLie}, we have $$U_d\cong U_d^\vee$$ (see~\cite[Lemma~11]{OvchRecoverGroup}). Note that the simple subquotients of $U_d$ have equal dimensions and the dimensions of simple subquotients of $W_d$ differ by $2$. Hence, $$U_d\not\cong W_s\quad d,s\Ge 1.$$ We also have $$U_d\not\cong U_s,\quad W_d\not\cong W_s,\quad \text{and}\quad W_d\not\cong W_s^\vee,\quad d\neq s\Ge 1,$$ because of the different dimensions. Finally, $$W_d\not\cong W_d^\vee,$$ because  the dimensions of the socles differ.
\end{proof}

\begin{lemma}\label{lem:row}
Let a $G$-module $V$ have a simple socle $U$ and the comodule map $$\rho: V\to V\otimes_\K A, \quad e_j\mapsto \sum_{i=1}^n e_i\otimes a_{ij},\ 1\Le j\Le n,$$ where $\{e_1,\dots,e_n\}$ is a basis of $V$ such that $e_1,\dots,e_k$ form a basis of $U$. Then the elements $a_{1j}$, $1\le j\le n$, form a basis of a submodule $W\subset A$ isomorphic to $V$.   
\end{lemma}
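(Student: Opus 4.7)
The plan is to project the comodule map $\rho$ onto its first coordinate to obtain a morphism $\phi\colon V \to A$, $e_j\mapsto a_{1j}$, and then apply Proposition~\ref{inj} to conclude that it is injective.

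To see that $\phi$ is a $G$-morphism, I would use coassociativity of $\rho$, namely $(\rho\otimes\Id)\rho=(\Id\otimes\Delta)\rho$, applied to $e_j$ and compared coefficient-wise in $V$; this yields the relation $\Delta(a_{ij})=\sum_k a_{ik}\otimes a_{kj}$ for every $i,j$. Specializing $i=1$ gives $\Delta(\phi(e_j))=\sum_k \phi(e_k)\otimes a_{kj}$, which is precisely the condition that $\phi$ be a comodule morphism. Equivalently, one can identify $V\otimes_\K A\cong A^n$ via the basis $\{e_i\}$ as a direct sum of $n$ copies of the right-regular comodule $A$, under which $\phi$ is the composition of $\rho$ with the projection to the first summand.

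Next, I would show that $\phi$ is nonzero on the socle $U$, so that Proposition~\ref{inj} applies. This is immediate from the counit axiom $(\Id\otimes\epsilon)\rho=\Id_V$: evaluating both sides on $e_1$ forces $\epsilon(a_{11})=1$, hence $a_{11}\neq 0$. Since $e_1\in U$ by hypothesis, $\phi(e_1)=a_{11}\neq 0$, so $\phi|_U\neq 0$. Proposition~\ref{inj} then ensures that $\phi$ is injective on all of $V$, so $\{a_{1j}\}_{j=1}^n$ is linearly independent in $A$ and spans a $G$-submodule $W=\phi(V)\subset A$ isomorphic to $V$.

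There is no real obstacle here; the content reduces to the two standard comodule identities (coassociativity and counit), and the role of the simple-socle hypothesis is simply to promote a single nonzero value on $U$ into full injectivity via Proposition~\ref{inj}. The lemma singles out the first row precisely because $e_1\in U$; the same argument would work for any row index $i$ such that $e_i$ lies in $U$.
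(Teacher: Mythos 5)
Your proof is correct and follows essentially the same route as the paper: both define the map $e_j\mapsto a_{1j}$, observe it is a $G$-morphism that is non-zero on the socle, and conclude injectivity from Proposition~\ref{inj}. The only difference is that you verify the equivariance and the non-vanishing on $U$ directly from the coassociativity and counit axioms, where the paper cites an external lemma; your computations are accurate.
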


\begin{proof}
Since the $G$-equivariant map $$V \to W \subset A,\quad e_j \mapsto a_{1j},\ 1\Le j \Le n,$$
is non-zero on the socle of $V$ (see \cite[Lemma~3]{OvchRecoverGroup}), it is injective by Proposition~\ref{inj}.
\end{proof}

\begin{theorem}\label{thm:embedintoB} Let $V \subset A$ be a differential representation of $\SL_2$ that is an extension of two irreducible representations of $\SL_2$. Then either $V$ or $V^\vee$ embeds into $B$. 
\end{theorem}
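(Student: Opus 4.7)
My plan is to use the degree filtration $\{A_{\le d}\}$ from Lemma~\ref{lem:degree} to produce a $G$-equivariant isomorphism $\mathrm{gr}\, A \cong B$ of differential Hopf algebras, and then to distinguish two cases according to where the socle of $V$ sits in the induced filtration.

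First I would verify that $\mathrm{gr}\, A := \bigoplus_d A_{\le d}/A_{\le d-1} \cong B$. Lemma~\ref{lem:degree} already shows the filtration is $G$-stable with $\Delta(A_{\le d}) \subset A_{\le d}\otimes A_{\le d}$, so $\mathrm{gr}\, A$ inherits a Hopf structure from $A$. The identification then reduces to checking $\mathrm{gr}\,[\det-1] = [\det]$ as graded ideals of $C$. The inclusion $[\det] \subset \mathrm{gr}\,[\det-1]$ is immediate: $\det$ is the top-degree part of $\det-1$, and $\det^{(i)}$ is homogeneous of degree $2$ for $i\ge 1$, hence its own top part. For the reverse inclusion, I would write any $g\in[\det-1]\cap C_{\le d}$ as $h_0(\det-1)+\sum_{i\ge 1}h_i\det^{(i)}$ and extract the degree-$d$ part, using Lemma~\ref{lem:detprime} to control the contribution of $h_0$ after the homogeneous pieces cancel.

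Next, I would set $d:=\deg V$ and examine the filtration $V_{\le e}:=V\cap A_{\le e}$. Each $V_{\le e}$ is a $G$-submodule, and since $V$ has submodule lattice $0\subsetneq V_1\subsetneq V$, the piece $V_{\le d-1}$ must be either $0$ or $V_1$ (it cannot equal $V$, since $d$ is the maximum degree of elements of $V$). If $V_{\le d-1}=0$, then the composition
\[ V\hookrightarrow A_{\le d}\twoheadrightarrow A_{\le d}/A_{\le d-1}\hookrightarrow\mathrm{gr}\, A\cong B \]
has kernel $V\cap A_{\le d-1}=0$, so $V\hookrightarrow B$ directly.

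Otherwise $V_{\le d-1}=V_1$, so the socle of $V$ has degree strictly below $d$. In this case I would pass to $V^\vee$: since $V^\vee$ has simple socle $V_2^\vee$, Proposition~\ref{prop:reg} and Lemma~\ref{lem:row} furnish an embedding $V^\vee\hookrightarrow A$, and the strategy is to argue that this embedding falls into the first case, i.e.\ $V^\vee\cap A_{\le \deg V^\vee -1}=0$, after which the previous step yields $V^\vee\hookrightarrow B$. The main obstacle will be justifying this last claim. Because the antipode $S$ of $A$ is degree-preserving (it permutes $c_{ij}$ up to sign and commutes with $\partial$), the module degrees coincide: $\deg V^\vee=\deg V=d$. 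Consequently, getting Case~1 for $V^\vee$ amounts to showing its socle $V_2^\vee$, which has degree $d_2=\deg V_2$ by self-duality of simple $\SL_2$-modules, already reaches the maximum $d$. This requires a degree analysis of the extension cocycle classifying $V$: one must rule out the configuration in which the off-diagonal block degree strictly exceeds both $d_1$ and $d_2$, since that would leave both $V$ and $V^\vee$ in Case~2 simultaneously. The essential input here is that differential cocycles for length-two $\SL_2$-extensions arise (up to coboundary) from $\partial$ applied to the action matrices, which forces their minimal degree to be bounded by $\max(d_1,d_2)$; thus in Case~2 for $V$ one necessarily has $d_2=d$, and the proof closes.
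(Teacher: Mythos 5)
Your first step (identifying the associated graded algebra of the degree filtration on $A$ with $B$, using Lemma~\ref{lem:detprime} to control the lower-order contribution of $h_0$ after the top homogeneous pieces cancel) is exactly the paper's construction of the maps $\alpha_d\colon A_{\Le d}\to B$ with kernel $A_{\Le d-1}$, and your reduction to the statement ``$V$ or $V^\vee$ meets $A_{\Le \deg-1}$ trivially'' is likewise the paper's reduction to homogeneity. The gap is in your last paragraph, precisely where you flag ``the main obstacle''. To close Case~2 you need $\deg V_2=\deg V$, equivalently $\deg V=\max\{\deg V_1,\deg V_2\}$, i.e.\ that the off-diagonal block of the representing matrices cannot have strictly larger degree than both diagonal blocks. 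You propose to get this from the claim that every differential cocycle for a length-two extension of $\SL_2$-modules is, up to coboundary, ``$\partial$ applied to the action matrices''. This is not an available input: it is unproven; it is essentially the classification (Theorem~\ref{thm:main} and Proposition~\ref{prop:final}) that this whole section is building toward, so invoking it here is circular; and as stated it is not even well-posed when $V_1\not\cong V_2$, since $\partial A_g$ is then not a map $V_2\to V_1$ (the non-split extensions $W_d$ of \eqref{eq:UdWd}, which relate $P_d^0$ and $P_{d-2}^0$, are not of prolongation type).

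The missing step has a cocycle-free proof, which is what the paper does to establish \eqref{eq:max}. Write $\rho(w_j)=\sum_i w_i\otimes a_{ij}$ in a basis adapted to $V_1\subset V$; then $\Delta(a_{ij})=\sum_l a_{il}\otimes a_{lj}$, and splitting the sum at $l=k=\dim V_1$ shows that the right-hand side lies in $A_{\Le d_1}\otimes_\K A_{\Le d}+A_{\Le d}\otimes_\K A_{\Le d_2}$. If some $a_{ij}$ had degree $d>\max\{d_1,d_2\}$, this would contradict assertion \eqref{eq:preimage} of Lemma~\ref{lem:degree}, which states that $\Delta^{-1}\left(A_{\Le d}\otimes_\K A_{\Le d-1}+A_{\Le d-1}\otimes_\K A_{\Le d}\right)=A_{\Le d-1}$. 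That inverse-image computation is the one nontrivial fact your argument is missing, and it is exactly what the harder half of Lemma~\ref{lem:degree} (again resting on Lemma~\ref{lem:detprime} and on the injectivity of the comultiplication of $C/[\det]$) was proved for. With \eqref{eq:max} in hand, your case analysis closes as you describe.
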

\begin{proof}
We will first show that if $V\subset A$ is a homogeneous $G$-submodule, then $V$ embeds into $B$. Let
$$
\pi_d:C_{\le d}=\bigoplus_{i=0}^d C_d\to C_d
$$
be the projection on the highest-degree component. For the restrictions of $\pi_A$ and $\pi_B$ to submodules $W\subset C$, we will use the same notation, for instance, $\pi_A: W\to A$. We will show that there is a $G$-equivariant morphism $\alpha_d: A_{\Le d}\to B$ making the following diagram of morphisms of $G$-modules commutative:
$$
\begin{CD}
C_{\Le d} @>\pi_d>>C_d\\
@V{\pi_A}VV @VV{\pi_B}V\\
A_{\Le d}@>\alpha_d>>B 
\end{CD}
$$
Equivalently, $$\pi_d(\Ker\pi_A\cap C_{\Le d})\subset \Ker\pi_B\cap C_d.$$ Indeed, let $f\in\Ker\pi_A\cap C_{\Le d}$. Since $\Ker\pi_A$ is generated by $(\det-1)$ and the derivatives $(\det-1)^{(i)}=\det{}^{(i)}$, $i\ge 1$, there are $f_0,\dots,f_k\in C_{\Le d}$ such that
$$f=f_0\cdot (\det-1)+f_1\cdot \det{}'+\cdots+f_k\cdot\det{}^{(k)}.$$
Collecting the terms of degree $d$ in the right-hand side, we obtain
\begin{equation}\label{eq:pid}
\pi_d(f) = g_0\cdot \det+g_1\cdot\det{}'+\cdots+g_k\cdot\det{}^{(k)}-h_0,
\end{equation}
where $g_i=\pi_{d-2}(f_i)\in C_{d-2}$, $h_0\in C_d$ and $$h_0\cdot\det=h_1\cdot\det{}'+\cdots+h_k\cdot\det{}^{(k)},$$ where $h_i=\pi_d(f_i)\in C_d$. Since the differential ideal ${\left[\det{}'\right]}\subset C$ is prime by Lemma~\ref{lem:detprime}  and does not contain $\det$, $$h_0\in{\left[\det{}'\right]}\subset\Ker\pi_B.$$ It follows from \eqref{eq:pid} that $$\pi_d(f)\in \Ker\pi_B\cap C_d,$$ which proves the existence of $\alpha_d$. 
Moreover, we have
$$
\Ker\alpha_d=A_{\Le d-1}.
$$
Indeed, $A_{\Le d-1}\subset\Ker\alpha_d$, because $C_{\Le d-1}=\Ker\pi_d$. On the other hand, if $f\in C_{\Le d}$ and $\pi_d(f)\in\Ker\pi_B$, then $$f=g_0\cdot\det+g_1\cdot\det{}'+\cdots+g_k\cdot\det{}^{(k)}+g$$ for some $g_i\in C_{d-2}$, $0\Le i \Le k$, and $g\in C_{\Le d-1}$. Hence, $f$ is congruent to $g_0+g\in C_{\Le d-1}$ modulo $\Ker\pi_A$, and $\pi_A(f)\in A_{\Le d-1}$.

We conclude that $V$ embeds into $B$ via $\alpha_d$ for some $d$ if (and only if) $V$ is homogeneous. We want to show that $V$ or $V^\vee$ is homogeneous and, thus, embeds into $B$. We will use the following observation. We say that a $G$-module $W$ has  degree $d$, and write $\deg W = d$, if the image of the comodule map $$\rho: W\to W\otimes_\K A$$ lies in $W\otimes_\K A_{\Le d}$ and not in $W\otimes_\K A_{\Le d-1}$. We will show that if $U\subset W$ is a $G$-submodule, then
\begin{equation}\label{eq:max}
\deg W = \max\{\deg U, \deg W/U\}.
\end{equation}
Set $$d=\deg W,\quad d_1=\deg U,\quad \text{and}\quad d_2=\deg W/U.$$ We have $$d\Ge \max\{d_1, d_2\}.$$ Fix a $\K$-basis $\{w_1,\dots,w_n\}$ of $W$ such that $w_1,\dots,w_k$ form a basis of $U$, and define $a_{ij}\in A$, $1\Le i,j\Le n$, by
$$
\rho(w_j)=\sum_{i=1}^nw_i\otimes a_{ij}.
$$
Then, for the comultiplication $\Delta: A\to A\otimes_\K A$, we have~\cite[Corollary~3.2]{Waterhouse}
\begin{equation}\label{eq:comod}
\Delta(a_{ij})=\sum_{l=1}^na_{il}\otimes a_{lj}=\sum_{l=1}^ka_{il}\otimes a_{lj}+\sum_{l=k+1}^na_{il}\otimes a_{lj}.
\end{equation}
Let $i$ and $j$, $1\Le i,j\Le n$, be such that $\deg a_{ij}=d$. Then, by Lemma~\ref{lem:degree}, the left-hand side of~\eqref{eq:comod} does not belong to $$A_{\Le d-1}\otimes_\K A_{\Le d}+A_{\Le d}\otimes_\K A_{\Le d-1},$$ while the right-hand side of~\eqref{eq:comod} belongs to $$A_{\Le d_1}\otimes_\K A_{\Le d}+A_{\Le d}\otimes_\K A_{\Le d_2}.$$ This is possible only if
$$
d\Le \max\{d_1, d_2\},
$$
which proves~\eqref{eq:max}.

For any $W\in\Rep G$, we have $\deg W=\deg W^\vee$. Indeed, if $\{a_{ij}\}$ are the matrix entries corresponding to $W$ in some basis, then, in the dual basis, the entries form the set $\{S(a_{ij})\}$, where $S:A\to A$ is the antipode. Since $S$ does not increase the degree (this is seen from its action on the generators $x_{ij}\in A$) and $S^2=\Id$, $S$ preserves the degree.  

If a submodule $W\subset A$ is not homogeneous, $W$ contains a proper submodule of smaller degree. Suppose $V$ is not homogeneous. Then, for the socle $U\subset V$, we have $\deg U<\deg V$. Then, by~\eqref{eq:max}, $$\deg V=\deg V/U.$$ Since $V/U$ is simple, so is $(V/U)^\vee\subset V^\vee$. We have $$\deg V^\vee=\deg(V/U)^\vee,$$ and, therefore, $V^\vee$ does not contain a proper submodule of smaller degree. Hence, $V^\vee$ is homogeneous.
\end{proof}

\begin{example}
Set $x_{ij}:=\pi_A(c_{ij})$, see~\eqref{eq:ABP}. Let $$V = \Span_\K{\left\{1,x_{11}'x_{21}-x_{11}x_{21}',x_{12}'x_{22}-x_{12}x_{22}',x_{11}'x_{22}-x_{21}'x_{12}\right\}} \subset A,$$ which is an $\SL_2$-submodule but not homogeneous, and, hence, the map $V \to B$ defined in the proof of Theorem~\ref{thm:embedintoB} is not injective on $V$. However, $$V^\vee \cong \Span_\K{\left\{x_{11}^2,x_{11}x_{12},x_{12}^2,x_{11}'x_{12}-x_{11}x_{12}'\right\}} 
\subset A$$ is homogeneous and, therefore, embeds into $B$. \end{example}

\begin{lemma}\label{lem:intoP} Let $V \subset B$ have simple socle (see Section~\ref{sec:ss}). Then $V$ embeds into $\K\{x,y\}$.
\end{lemma}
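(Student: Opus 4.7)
The plan is to factor the desired embedding through the ``Segre'' parametrization of singular $2\times 2$ matrices. I would first introduce two new differential indeterminates $v_1, v_2$ and define a differential $\K$-algebra homomorphism
$$
\Psi\colon B \longrightarrow R:=\K\{v_1, v_2, x, y\},\qquad c_{ij}\mapsto v_i\,x_j,
$$
where $x_1=x$ and $x_2=y$. Since $\Psi(\det)=v_1v_2(xy-yx)=0$ and $\Psi$ commutes with $\partial$, this is well-defined on $B=C/[\det]$. Equip $R$ with the $G$-action in which $v_1, v_2$ are $G$-invariant and $x, y$ transform in the standard way; then on generators $\Psi(r_g c_{ij})=v_i\,r_g(x_j)=r_g\bigl(v_i x_j\bigr)$, so $\Psi$ is $G$-equivariant. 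Moreover, because $v_i^{(k)}$ are $G$-fixed, the multiplication map $\K\{v_1,v_2\}\otimes_\K P\to R$, $F\otimes g\mapsto F\cdot g$, is a $G$-equivariant $\K$-linear isomorphism when $\K\{v_1,v_2\}$ is given the trivial $G$-action.

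Let $U\subset V$ denote the simple socle. By \cite[Theorem~3.3]{diffreductive}, $U$ is an algebraic representation, hence embeds into the polynomial subring $\K[c_{ij}]/(\det)\subset B$. On this polynomial subring $\Psi$ restricts to the classical Segre embedding $\K[c_{ij}]/(\det)\hookrightarrow\K[v_1,v_2,x,y]$, which is injective, so $\Psi(U)\ne 0$. Pick any $0\ne w\in U$ and, using the identification $R\cong\K\{v_1,v_2\}\otimes_\K P$, write
$$
\Psi(w)=\sum_{i=1}^n F_i\otimes g_i
$$
with $g_1,\dots,g_n\in P$ linearly independent over $\K$ and $F_i\in\K\{v_1,v_2\}$; at least one $F_{i_0}$ is non-zero because $\Psi(w)\ne 0$, and so we may choose a $\K$-linear functional $\ell\colon\K\{v_1,v_2\}\to\K$ with $\ell(F_{i_0})\ne 0$.

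The map $\ell\otimes\Id_P\colon\K\{v_1,v_2\}\otimes_\K P\to P$ is $G$-equivariant because the first factor is a trivial $G$-module, so the composite
$$
\alpha\colon V\hookrightarrow B\xrightarrow{\;\Psi\;}\K\{v_1,v_2\}\otimes_\K P\xrightarrow{\;\ell\otimes\Id_P\;}\K\{x,y\}
$$
is a $G$-equivariant $\K$-linear map sending $w$ to $\sum_i\ell(F_i)g_i$, which is non-zero by linear independence of the $g_i$. Thus $\alpha(U)\ne 0$, and Proposition~\ref{inj} yields injectivity of $\alpha$ on $V$. The chief point requiring care is the $G$-equivariant identification $R\cong\K\{v_1,v_2\}\otimes_\K P$: this is precisely what lets an arbitrary $\K$-linear functional on the $v$-factor produce a $G$-equivariant projection onto $P=\K\{x,y\}$. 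Notably, this approach does not require the non-constant element hypothesis on $\K$.
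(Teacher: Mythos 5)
Your overall strategy --- parametrize the singular matrices by the Segre map $c_{ij}\mapsto v_ix_j$, identify the target $G$-equivariantly with $\K\{v_1,v_2\}\otimes_\K P$ with trivial action on the first factor, and project onto $P$ by a linear functional chosen so as not to kill the socle, then invoke Proposition~\ref{inj} --- is coherent and genuinely different from the paper's proof (which instead specializes the second row of the matrix to $\beta$ times the first for a suitable $\beta\in\K$). But there is a real gap at the step ``$\Psi(U)\neq 0$''. You argue: $U$ is algebraic, hence \emph{embeds into} the polynomial subring $\K[c_{ij}]/(\det)$, on which $\Psi$ restricts to the injective Segre embedding, so $\Psi(U)\neq 0$. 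The first clause only gives an abstract isomorphism of $U$ with \emph{some} submodule of the order-zero part of $B$; it does not place the given copy $U\subset V\subset B$ inside $\K[c_{ij}]/(\det)$. Proposition~\ref{subquot} says that the homomorphism $G\to\GL(U)$ is given by order-zero matrix entries, not that the elements of $U$ are order-zero differential polynomials, and a simple submodule of $B$ could a priori be spanned by positive-order elements. Injectivity of $\Psi$ on the polynomial subring therefore says nothing about $\Psi|_U$. This is precisely the point the paper spends effort on: it first uses Lemma~\ref{lem:row} to replace $V$ by an isomorphic copy whose socle genuinely consists of order-zero polynomials, and re-embeds into $B$ preserving that property, before specializing.

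The gap is fixable within your framework, but by a different argument: one can show that $\Psi$ is injective on all of $B$, so that no hypothesis on the location of $U$ is needed. Indeed, $\Ker(C\to R)$ is the set of differential polynomials vanishing on the image of the Segre map over $\U$; since every singular $2\times 2$ matrix over a field is the product of a column by a row, that image is set-theoretically all of the zero set of $\det$ in $\U^4$ (the derivatives $\det{}^{(i)}$ vanish automatically at any point where $\det$ does, because evaluation commutes with $\partial$). Hence $\Ker(C\to R)=\I\bigl(V(\det)\bigr)$ is the radical of $[\det]$, which equals $[\det]$ by the reducedness result of \cite{Mustata} already invoked in~\eqref{eq:ABP}. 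With that, $\Psi(U)\neq 0$ for every nonzero submodule $U$, and the rest of your proof goes through. As written, however, the justification you give for $\Psi(U)\neq 0$ is a non sequitur.
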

\begin{proof}
By Proposition~\ref{subquot}, $V_{soc}$ is algebraic. Hence, by Lemma~\ref{lem:row}, $$V\simeq W\subset A\quad \text{with}\quad W_{soc}\subset\K[c_{ij}]/(\det-1).$$ Moreover, since $B$ is the direct sum of $\pi_B(C_d)$, $d\ge 0$, $V$ is homogeneous and, therefore, so is $W$. As in the proof of Theorem~\ref{thm:embedintoB}, $W$ embeds into $B$ so that its image $\tilde V\cong V$ has the socle in the non-differential polynomials $\K[x,y,x_1,y_1]$, where 
$$
x:=\pi_B(c_{11}),\quad y:=\pi_B(c_{12}),\quad x_1:=\pi_B(c_{21}),\quad y_1:=\pi_B(c_{22}).
$$
Therefore, without loss of generality, we may assume $V_{soc}\subset\K[x,y,x_1,y_1]$.

Let $0 \ne f \in  V_{soc}$. Since $\U$ is algebraically closed, there exists $0\neq (a,b,a_1,b_1) \in \U^4$ such that $$ab_1-ba_1 = 0,\quad \text{and}\quad f(a,b,a_1,b_1) \ne 0.$$
Suppose $a\ne 0$ (the cases $b\ne 0$, $a_1\ne 0$, $b_1\ne 0$ are considered similarly). Set $\alpha := a_1/a$. Then $b_1 = b\alpha$. So,
$f(a,b,a\alpha,b\alpha) \ne 0$, which implies that 
$$
0 \ne g_f(x,y,z) := f(x,y,xz,yz) \in \K\{x,y,z\}.
$$
Since $\Q \subset\K$ is infinite and the polynomial $g_f$ is non-zero, there exists $\beta \in \K$ such that $g(x,y,\beta) \ne 0$. Therefore, the $\SL_2$-equivariant differential ring homomorphism
$$
\varphi : B \to \K\{x,y\},\quad h(x,y,x_1,y_1) \mapsto h(x,y,x\beta,y\beta)
$$
is injective on $V_{soc}$. Thus, by Proposition~\ref{inj}, $\varphi$ is injective on $V$. 
\end{proof}

\begin{lemma}\label{lem:firstorder} Let $V \subset \K\{x,y\}$ be from $\Rep_0G$, where $G = \SL_2$, and be an extension of two irreducible $G$-modules. Then $V\subset P_d^1$ and $V_{soc}= P_d^0$.
\end{lemma}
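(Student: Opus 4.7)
My plan is to argue in three steps: identify the socle, bound the weight, and use homogeneity of the embedding.

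Step~1, identify the socle. The intersection $V\cap\K[x,y]$ is a semisimple $G$-submodule of $V$, since $\K[x,y]=\bigoplus_k P_k^0$ is semisimple. It is non-zero: if $\wt V = 0$ then $V\subset\K[x,y]$ trivially, and if $\wt V\Ge 1$, iterating Lemma~\ref{lem:free} on a top-weight element of $V$ yields an element of weight~$0$. Uniqueness of the minimal submodule of $V$ then forces $V\cap\K[x,y]=V_{soc}$. Proposition~\ref{subquot} makes $V_{soc}$ algebraic, hence isomorphic to some $P_d^0$; multiplicity-freeness of the decomposition pins $V_{soc}=P_d^0$ as a subspace.

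Step~2, bound the weight. Let $w=\wt V$ and write $P^{\Le k}:=\{f\in P:\wt f\Le k\}$. The chain
\[
V_{soc}=V\cap P^{\Le 0}\subsetneq V\cap P^{\Le 1}\subsetneq\cdots\subsetneq V\cap P^{\Le w}=V
\]
has all inclusions strict by Lemma~\ref{lem:free}, which guarantees elements of every weight between $0$ and $w$. Because $V$ has composition length two, at most one strict step above $V_{soc}$ is allowed; hence $w\Le 1$ and $V\subset P^{\Le 1}$.

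Step~3, establish homogeneity of $V$ in degree $d$. In the setting in which this lemma is applied, $V\subset\K\{x,y\}$ is the image, via Lemma~\ref{lem:intoP}, of a homogeneous submodule of $B$ under the degree-preserving substitution~$\varphi$, so $V\subset(\K\{x,y\})_d$. Combining with Step~2 gives $V\subset(\K\{x,y\})_d\cap P^{\Le 1}=P_d^1$. The delicate point I anticipate is that homogeneity is not a formal consequence of $V\in\Rep_0 G$ together with $V\subset\K\{x,y\}$---twisting a realisation of $W_d$ by a non-zero $G$-morphism into a lower-degree $P_e^0\subset\K[x,y]$ produces a non-homogeneous $G$-submodule with the same abstract structure---so Step~3 relies on the preceding construction rather than on the abstract hypothesis alone.
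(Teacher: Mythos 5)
Your Steps 1 and 2 are correct and coincide with the paper's (much terser) argument: identify $V_{soc}$ as the non-zero semisimple submodule $V\cap\K[x,y]$, which must equal some $P_d^0$ by multiplicity-freeness of $\K[x,y]=\bigoplus_k P_k^0$, and then bound the weight by $1$ by combining Lemma~\ref{lem:free} with the facts that the weight filtration is by $G$-submodules and that $V$ has composition length two. The point you raise in Step 3 is genuine and worth making explicit: the printed proof works with $V\cap P_d^k$ for a \emph{fixed} $d$, i.e., it silently assumes $V$ is homogeneous of degree $d$, and you are right that this does not follow from the hypotheses as literally stated. Your counterexample is valid: the composite $\phi\colon W_d\to W_d/P_d^0\cong P_{d-2}^0\subset\K\{x,y\}$ is a $G$-morphism, so the image of $w\mapsto w+\phi(w)$ is a submodule of $\K\{x,y\}$ isomorphic to $W_d$ (hence in $\Rep_0G$, an extension of two irreducibles, with socle $P_d^0$) that is contained in no $P_e^1$. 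You are also right about the remedy: in the only place the lemma is invoked, $V$ arises via Lemma~\ref{lem:intoP} as the image, under the degree-preserving substitution $\varphi$, of a submodule of $\pi_B(C_d)$, hence is homogeneous of degree $d$, and then your conclusion $V\subset P_d\cap P^{\Le 1}=P_d^1$ closes the argument. So your proof is correct and, on the homogeneity issue, more careful than the paper's own; the lemma should be read with homogeneity of $V$ as an implicit standing assumption supplied by the preceding steps of Theorem~\ref{thm:main}.
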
 
\begin{proof}
The torus $\Gm\subset\SL_2$ (embedded as diagonal matrices with entries $a$ and $a^{-1}$) acts on $P$. By the representation theory of $\SL_2$, $P_d^0$ is simple. The rest follows from Lemma~\ref{lem:free} and the observation that $V\cap P_d^k$ is a submodule of $V$ for every $k\Ge 0$.
\end{proof}

\begin{proposition}\label{prop:final}
Let $V\subset P_d^1$ be a submodule that is an extension of two irreducible $G$-modules.  
Then  $V=U_d$ or $V=W_d$, see \eqref{eq:UdWd}.
\end{proposition}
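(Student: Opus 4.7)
The plan is to pass to the quotient $Q := P_d^1/P_d^0$ and classify its simple $G$-submodules. By Lemma~\ref{lem:firstorder}, $P_d^0 = V_{soc} \subset V \subset P_d^1$, and since $V$ is a non-split extension of two irreducibles, $V/P_d^0$ is a simple $G$-submodule of $Q$. Thus $V$ is determined by its image in $Q$, and the proposition reduces to showing that $Q$ has exactly two simple submodules, corresponding to $U_d$ and $W_d$.

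The first step is to identify $Q$ as a $G$-module. A $\K$-complement of $P_d^0$ in $P_d^1$ is spanned by the weight-$1$ monomials of degree $d$, each of which factors uniquely as $uz$ with $u$ a degree-$(d-1)$ monomial in $x,y$ and $z \in \{x', y'\}$. Under the action from Example~\ref{ex:reg}, $x' \mapsto (ax+cy)' = a'x + ax' + c'y + cy'$, and after multiplication by $u \in P_{d-1}^0$ the weight-$0$ terms $ua'x + uc'y$ lie in $P_d^0$ and vanish in $Q$; similarly for $y'$. Hence the action on $\Span_\K\{x', y'\}$ inside $Q$ becomes the standard $2$-dimensional representation, giving $Q \cong P_{d-1}^0 \otimes_\K P_1^0$ as $G$-modules. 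Since this is an algebraic $G$-module, the classical Clebsch--Gordan decomposition (applicable via Proposition~\ref{subquot}) yields $Q \cong P_d^0 \oplus P_{d-2}^0$, with $P_{d-2}^0 := 0$ when $d = 1$.

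The second step is to exhibit the two simple submodules of $Q$ explicitly. The assignment $\phi: f \mapsto f' \bmod P_d^0$ defines a $G$-equivariant injection $P_d^0 \hookrightarrow Q$ (equivariance from $\partial$ commuting with the $G$-action, injectivity for $d \ge 1$), and its preimage under $P_d^1 \to Q$ is $U_d$ by~\eqref{eq:UdWd}. The assignment $\psi: f \mapsto (x'y - xy')\cdot f \bmod P_d^0$ defines a $G$-equivariant injection $P_{d-2}^0 \hookrightarrow Q$: equivariance holds because, as visible from Example~\ref{ex:reg}, $g\cdot(x'y - xy') - (x'y - xy') \in \K[x,y]$, and this error multiplied by any element of $P_{d-2}^0$ lies in $P_d^0$; injectivity follows since $P$ is an integral domain. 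Its preimage is $W_d$. As the two summands $P_d^0$ and $P_{d-2}^0$ of $Q$ have distinct dimensions $d+1$ and $d-1$, they are non-isomorphic, so $Q$ has at most two simple submodules, which must then coincide with $\phi(P_d^0)$ and $\psi(P_{d-2}^0)$. Consequently, $V/P_d^0$ equals one of these, giving $V = U_d$ or $V = W_d$.

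The main obstacle will be carefully verifying the $G$-module structure on $Q$: one must check that after quotienting by $P_d^0$, the ``correction terms'' coming from the derivatives $a', b', c', d'$ of the matrix coefficients drop out, so that the induced action on $\Span\{x', y'\}$ descends to the standard representation. Once this computation is in place, the rest is a routine application of classical Clebsch--Gordan together with the explicit matchings of the simple submodules of $Q$ to $U_d/P_d^0$ and $W_d/P_d^0$.
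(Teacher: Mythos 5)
Your proof is correct, and its skeleton is the same as the paper's: quotient by the socle $P_d^0$ (supplied by Lemma~\ref{lem:firstorder}), show that $Q=P_d^1/P_d^0$ is the direct sum of two non-isomorphic simple modules whose preimages are $U_d$ and $W_d$, and pull back the simple module $q(V)$. The one genuinely different ingredient is how you establish that $Q$ has exactly these two simple submodules: the paper simply observes that $q(U_d)\simeq P_d^0$ and $q(W_d)\simeq P_{d-2}^0$ are non-isomorphic simples whose dimensions add up to $\dim Q=2d$, so $Q=q(U_d)\oplus q(W_d)$ by a dimension count; you instead identify $Q\cong P_{d-1}^0\otimes_\K P_1^0$ via the factorization of weight-one monomials and invoke Clebsch--Gordan. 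Your route requires the extra (correctly executed) verification that the derivative terms $a',b',c',d'$ die in the quotient so that $Q$ is algebraic, after which Clebsch--Gordan applies; what it buys is an a priori reason why $Q$ has exactly two irreducible constituents, rather than discovering this post hoc from the dimension count. Your explicit maps $\phi\colon f\mapsto f'\bmod P_d^0$ and $\psi\colon f\mapsto (x'y-xy')f\bmod P_d^0$, with the checks that the correction terms land in $P_d^0$, are exactly the content of the paper's unproved assertions $U_d/P_d^0\simeq P_d^0$ and $W_d/P_d^0\simeq P_{d-2}^0$, so your write-up is in fact slightly more complete on that point. Either argument is fine.
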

\begin{proof}
Note that $$W_d/P_d^0\simeq P_{d-2}^0\quad \text{and}\quad U_d/P_d^0\simeq P_d^0.$$ Hence, for the quotient map $$q: P_d^1\to P_d^1/P_d^0,$$ the sum $q(U_d)+q(W_d)$ is direct.
Since $$\dim \im q = 2d = \dim P_d^0 + \dim P_{d-2}^0,$$ 
we have $$\im q = q(U_d)\oplus q(W_d).$$ Since $V\supset P_d^0$ by Lemma~\ref{lem:firstorder}, $q(V)$ is irreducible and, therefore, must coincide with one of the summands. Finally, if, for instance, $q(V)=q(U_d)$, then $$V=q^{-1}(q(V))=q^{-1}(q(U_d))=U_d,$$ because $\Ker q \subset V$ and $\Ker q\subset U_d$. Similarly, if $q(V)=q(W_d)$, then $V=W_d$.
\end{proof}

\subsection{Example}
If we omit the requirement for $V$ being an extension of {\it two} irreducible $\SL_2$-modules, then the claim of Theorem~\ref{thm:embedintoB} is no longer true as the following example shows. 
\begin{example}\label{ex:counterexample} 
Let $V =\Span_\K\left\{1, x_{11}'x_{21}-x_{11}x_{21}',x_{11}'x_{22}-x_{21}'x_{12},x_{12}'x_{22}-x_{12}x_{22}',x_{11}'x_{22}-x_{12}'x_{21}\right\} \subset A$, 
which gives
the following differential representation of $\SL_2$:
$$
\SL_2(\U) \ni \begin{pmatrix}
a& b\\
c& d
\end{pmatrix} \mapsto
\begin{pmatrix}
1 & a'c-ac'& a'd-bc'& b'd-bd'&a'd'-b'c'\\
0&a^2 & ab & b^2& ab'-a'b \\
0&2ac&ad+bc& 2bd& 2(ad'-bc')\\
0&c^2& cd& d^2&cd'-c'd\\
0&0&0&0&1
\end{pmatrix},
$$
with neither $V$ nor $V^\vee$ embeddable into $\K\{x,y\}$. Indeed, if $V$ were embeddable into $\K\{x,y\}$, then 
the submodule $$W := \Span_\K\left\{1, x_{11}'x_{21}-x_{11}x_{21}',x_{11}'x_{22}-x_{21}'x_{12},x_{12}'x_{22}-x_{12}x_{22}'\right\}$$ would have an embedding into $\K\{x,y\}$. We will show that this is impossible. For this, first let $\varphi(W) \subset \K\{x,y\}$. Note that a calculation shows that $W\cong U^\vee$ for $$U := \Span_\K\left\{x^2,xy,y^2,x'y-xy'\right\} \subset \K\{x,y\},$$ which was discussed in Example~\ref{ex:reg}. Since the only $\SL_2$-invariant elements of $\K\{x,y\}$ are $\K$, there would be a splitting of $$\varphi(W)\cong\K\oplus\ker\psi,\quad \psi : \varphi(W) \to \K,\ \ w \mapsto w(0),$$ that is, taking the term with no $x$ and $y$ in it. However, this would mean that $W^\vee \cong U$ splits into a direct sum of two modules of dimension $3$ and $1$ as well, that is,
$$
\begin{CD}
0 @>>> \Span_\K\left\{x^2,xy,y^2\right\} @>>> U @>\pi>> \K @>>> 0,
\end{CD}
$$
where $\pi$ is the usual quotient map,
has a splitting $s : \K \to U$ such that $\pi\circ s = \id_\K$. 

On the one hand,
$\{0\} \ne s(\K) \subset U$ is $\SL_2$-invariant, because $\K$ is and $s$ is $\SL_2$-equivariant. On the other hand, the only $\SL_2$-invariant element in $U$ is $0 \in U\cap\K$. This is a contradiction, implying that $V$ does not embed into $\K\{x,y\}$. Since the diagonal blocks of $V^\vee$ have the same dimensions and are in the same order, $(1,3,1)$, as $V$, the above
argument also shows that $V^\vee$ does not embed into $\K\{x,y\}$.
\end{example}

\section{Acknowledgments} We are grateful to Phyllis Cassidy, Sergey Gorchinskiy, Charlotte Hardouin, Ray Hoobler, Marina Kondratieva, Nicole Lemire, Mircea Musta\c{t}\u{a}, Lex Renner, Michael Singer, William Sit,  Dmitry Trushin, Alexey Zobnin, and the referee for their helpful comments.

\bibliographystyle{spmpsci}
\bibliography{sl2}

\end{document}